\newcommand{\R}{\mathbb{R}}
\newcommand{\C}{\mathbb{C}}
\newcommand{\s}{\mathbb{S}}
\newcommand{\h}{\mathbb{H}}
\newcommand{\E}{\mathbb{E}}
\newcommand{\Nil}{\mathrm{Nil}_3}
\newcommand{\X}{\mathfrak{X}}
\newcommand{\df}{\mathrm{d}}
\newtheorem{theorem}{Theorem}[section]
\newtheorem{proposition}[theorem]{Proposition}
\newtheorem{corollary}[theorem]{Corollary}
\newtheorem{lemma}[theorem]{Lemma}
\theoremstyle{definition}
  \newtheorem{definition}[theorem]{Definition}
    \newtheorem{example}[theorem]{Example}
\theoremstyle{remark}
  \newtheorem{remark}[theorem]{Remark}
\numberwithin{equation}{section}
\setlist[itemize]{parsep=0.4em}
\setlist[enumerate]{parsep=0.4em}
\title{Compact stable surfaces with constant mean curvature in Killing submersions}
\author{Ana M. Lerma}
\address{Departamento de Didáctica de las Ciencias \\
Universidad de Ja\'{e}n \\
23071 Jaén, SPAIN} 
\email{alerma@ujaen.es}
\author{Jos\'{e} M. Manzano}
\address{Department of Mathematics \\
King's College London \\
WC2R 2LS London, UK} 
\email{manzanoprego@gmail.com}
\thanks{}
\subjclass[2010]{Primary 53C42; Secondary 53C15, 53C30}
\keywords{Minimal surfaces, constant mean curvature surfaces, Killing submersions, stability, Bernstein problem}
\begin{document}

\begin{abstract}
A Killing submersion is a Riemannian submersion from a $3$-manifold to a surface, both connected and orientable, whose fibres are the integral curves of a Killing vector field, not necessarily unitary. The first part of this paper deals with the classification of all Killing submersions in terms of two geometric functions, namely the bundle curvature and the length of the Killing vector field, which can be prescribed arbitrarily. In a second part, we show that if the base is compact and the submersion admits a global section, then it also admits a global minimal section. These turn out to be the only global sections with constant mean curvature, which solves the Bernstein problem in Killing submersions over compact base surfaces, as well as the Plateau problem with empty boundary. Finally, we prove that any compact orientable stable surface with constant mean curvature immersed in the total space of a Killing submersion must be either an entire minimal section or everywhere tangent to the Killing direction.
\end{abstract}

\maketitle

\section{Introduction}

Surface theory in Riemannian $3$-manifolds admitting a Killing vector field has experienced an increasing interest during the last decades. This interest has been specially significant in the case of product spaces $M\times\R$ as well as in homogeneous $3$-manifolds with isometry group of dimension $4$, also known as $\E(\kappa,\tau)$-spaces. As a common framework for these spaces, the theory of unit-Killing submersions was developed, being the work of Rosenberg, Souam and Toubiana~\cite{RST} pioneer in this topic (see also~\cite{EO,LR,MO,SV}). 

The first classification results for unit-Killing submersions were obtained by the second author. It is proved in~\cite{Man14} that, given a simply connected surface $M$ and a function $\tau\in\mathcal{C}^\infty(M)$, there exists a Riemannian submersion $\pi:\E\to M$ such that $\E$ is simply connected and orientable, the fibres of $\pi$ are the integral curves of a unit Killing vector field $\xi$ and $\pi$ has bundle curvature $\tau$ (i.e., $\overline\nabla_X\xi=\tau X\wedge\xi$ for all vector fields $X\in\X(\E)$, where $\overline\nabla$ stands for the Levi-Civita connection in $\E$). Moreover, $\pi$ turns out to be unique under these assumptions. Dropping the unitary condition, a Riemannian submersion fulfilling the aforementioned properties will be called a \emph{Killing submersion} (see Definition~\ref{def:killing}).

This generalization is motivated by the fact that some $3$-manifolds admitting non-unitary Killing vector fields have recently gained certain relevance. Among them we highlight the simply connected homogeneous ones (we refer the reader to the survey by Meeks and Pérez~\cite{MeeksPerez} and the references therein), whose spaces of Killing vector fields have dimension at least $3$, giving rise to Killing-submersion structures in any given Killing direction (see Examples~\ref{ex:homogeneous-R3} and~\ref{ex:homogeneous-S3}). Although we will give a complete classification of Killing submersions, it is also important to mention that several authors have already dealt with surfaces in generic $n$-manifolds admitting a Killing vector field (see, for instance,~\cite{ADR,DL}).

Given an arbitrary Killing submersion $\pi:\E\to M$, we can associate to $\pi$ two geometric functions. One of them is the \emph{bundle curvature} $\tau$, a function determining the horizontal part of the curvature $2$-form associated to $\pi$ (see Equation~\eqref{eqn:tau}), which generalizes the definition of bundle curvature in the unitary case and measures the non-integrability of the horizontal distribution. The second function is the \emph{Killing length} $\mu$, i.e., the length of the Killing vector field. Both $\tau$ and $\mu$ are constant along fibres and induce smooth functions on $M$. This way we can pose the problem of existence and uniqueness of Killing submersions fixing a Riemannian surface $M$ and the functions $\tau,\mu\in\mathcal{C}^\infty(M)$, $\mu>0$. 

Section~\ref{sec:Killing-submersions} is devoted to give the following complete solution:
\begin{itemize}
	\item If $M$ is simply connected, then there exists a Killing submersion $\pi:\E\to M$ with bundle curvature $\tau$ and Killing length $\mu$, which is unique provided that $\E$ is simply connected.
	\begin{itemize}
	 	\item If $M$ is topologically $\R^2$, then $\pi$ is a trivial fibration, and we are able to get an explicit model for $\pi$ (see Theorem~\ref{thm:classification-disk} and its proof). In particular, $\E$ is diffeomorphic to $\R^3$.
	 	\item If $M$ is topologically $\s^2$, then $\pi$ admits a global section if and only if $\int_M\frac{\tau}{\mu}=0$, and in that case $\E$ is diffeomorphic to $\s^2\times\R$. Otherwise $\pi$ is topologically the Hopf fibration and $\E$ is diffeomorphic to $\s^3$ (see Theorem~\ref{thm:classification-sphere}).
	 \end{itemize}
	 \item If $\pi:\E\to M$ is a Killing submersion and $M$ or $\E$ are not simply connected, then there exist a Killing submersion $\widetilde\pi:\widetilde\E\to\widetilde M$, being $\widetilde M$ and $\widetilde\E$ the universal Riemannian coverings of $M$ and $\E$, respectively, and a group $G$ of isometries on $\widetilde\E$ preserving the Killing direction, such that $G$ acts properly discontinuously on $\widetilde\E$ and $\E=\widetilde\E/G$ (see Theorem~\ref{thm:classification-other}).
\end{itemize}
Existence is therefore guaranteed by the above results, but uniqueness may fail when the base surface is not simply connected (see Example~\ref{ex:non-uniqueness}). 

It is also important to remark that Killing submersions are well-understood objects at the level of Differential Topology (see, for instance,~\cite{GHV,Ste}), so our contribution is to give a uniform treatment to all of them in terms of the functions $\tau$ and $\mu$, which prove to be useful when analysing the geometry and topology of surfaces. One example is the duality between mean curvature and bundle curvature shown by the Calabi-type correspondence~\cite[Theorem~3.7]{LeeMan}; another example is the fact that $\pi:\E\to M$ admits a global section if and only if either $M$ is non-compact or $M$ is compact and $\int_M\frac{\tau}{\mu}=0$ (see Proposition~\ref{prop:existence-sections}).

In section~\ref{sec:mean-curvature}, we will introduce graphs in Killing submersions as sections of the submersion over open sets in the base, and give a divergence-type equation for their mean curvature (see Lemma~\ref{lemma:H}). We will prove that any Killing submersion $\pi:\E\to M$ over a compact surface $M$ which admits a global section also admits a global \emph{minimal} section (see Theorem~\ref{thm:existence-entire-minimal}). The proof is based on minimizing area inside the isotopy class of a global section by means of a result of Meeks, Simon and Yau~\cite{MeeksSimonYau}. This argument is not valid if $M$ is a sphere, but in that case we can employ a Calabi-type duality together with an existence result of Gerhardt~\cite{Ger} for graphs with prescribed mean curvature in Lorentzian warped products. The aforesaid duality is inspired by the work of Albujer and Al\'{i}as~\cite{AA} (see also~\cite[Theorem~3.7]{LeeMan}). Let us now give two ways of understanding Theorem~\ref{thm:existence-entire-minimal}:
\begin{itemize}
	\item  On the one hand, Theorem~\ref{thm:existence-entire-minimal} solves the Bernstein problem in any Killing submersion $\pi:\E\to M$ provided that $M$ is compact: entire graphs with constant mean curvature $H\in\R$ exist if and only if $H=0$ and $\int_M\frac{\tau}{\mu}=0$, and they are unique up to isometries.
	\item  On the other hand, Theorem~\ref{thm:existence-entire-minimal} can be regarded as the solution to a Plateau problem with empty boundary, which from the point of view of Calculus of Variations is equivalent to minimizing the area functional
	\begin{equation}\label{eqn:area-functional}
	  \mathcal{A}(u)=\int_M\sqrt{\rho^2+\|\nabla u+X\|^2},
	\end{equation}
	where $M$ is a compact surface, $\rho\in\mathcal{C}^\infty(M)$ is positive, and $X$ is a smooth vector field in $M$. Theorem~\ref{thm:existence-entire-minimal} guarantees the existence of $u\in\mathcal{C}^\infty(M)$ minimizing~\eqref{eqn:area-functional} among all Lipschitz functions on $M$ (note that~\eqref{eqn:area-functional} is the area of an entire graph in a Killing submersion over $M$ with Killing length $\mu=\rho^{-1}$ and bundle curvature $\tau=2\rho^{-1}\,\mathrm{div}(JX)$, see Equation~\eqref{eqn:area-element}).
\end{itemize}
Using the work of Simon on the existence of embedded minimal spheres~\cite{Simon} we also deduce that a Killing submersion $\pi:\E\to M$, $M$ not necesarily compact, admits an immersed minimal sphere if and only if $M$ is a sphere itself (see Theorem~\ref{thm:minimal-spheres}). These minimal spheres are unique up to ambient isometries if the submersion is trivial by Theorem~\ref{thm:existence-entire-minimal}, and also if $\E$ is a homogeneous $3$-sphere~\cite{MeeksMiraPerezRos}, but uniqueness is not hitherto known to hold in general.

In Section~\ref{sec:stability} we will address the problem of stability of a compact orientable surface with constant mean curvature immersed in the total space of a Killing submersion $\pi:\E\to M$. Recall that a two-sided surface $\Sigma$ immersed in $\E$ (here two-sided is equivalent to orientable, since $\E$ is assumed orientable) has constant mean curvature $H\in\R$ if and only if it is a critical point of $\mathcal J=\mathrm{Area}-2H\cdot\mathrm{Vol}$, as proved by Barbosa, do Carmo and Eschenburg~\cite{BCE}. Then $\Sigma$ is said \emph{stable} if 
\[\mathcal J''(0)=-\int_\Sigma\left(\Delta f+(|A|^2+\mathrm{Ric}(N))f\right)\geq 0,\]
for all compactly-supported smooth functions $f\in\mathcal C_0^\infty(\Sigma)$. In other words, $\Sigma$ is a second-order minimum of $\mathcal J$ for all compactly-suppor\-ted normal variations of $\Sigma$ via the trivialization of the normal bundle given by a unit normal vector field $N$. Here $A$ denotes the shape operator. This is not the notion of stability associated to the isoperimetric problem, where the variations are required to be volume-preserving, but a stronger one (a comprehensive introduction to stability can be found in the survey by Meeks, Pérez and Ros~\cite{MeeksPerezRos}). We will use a characterization due to Fischer-Colbrie~\cite{FC} stating that $\Sigma$ is stable if and only if there exists a smooth function $u>0$ satisfying $Lu=0$, where $L=\Delta+|A|^2+\mathrm{Ric}(N)$ is the so-called \emph{stability operator} of $\Sigma$ (a more precise expression for this Schr\"{o}dinger operator in the Killing-submersion setting will be given in Lemma~\ref{lemma:L}).

Using the fact that the angle function $\nu=\langle N,\xi\rangle$ associated to a vertical Killing vector field $\xi$ lies in the kernel of $L$, we will show that any compact orientable stable surface immersed in $\E$ with constant mean curvature must be either a vertical cylinder (i.e., everywhere vertical) or an entire minimal graph (see Theorem~\ref{thm:compact-stable}). In particular, if $M$ is not compact and the fibres of $\pi$ are not compact either, then $\E$ does not admit compact orientable stable surfaces with constant mean curvature. Theorem~\ref{thm:compact-stable} improves previous results by Mero\~{n}o and Ortiz~\cite[Corollaries 4, 6 and 8]{MO} and generalizes other results by the second author, P\'{e}rez and Rodr\'{i}guez~\cite{MPR} in $\E(\kappa,\tau)$-spaces. Theorem~\ref{thm:compact-stable} also reveals the topology of a stable surface $\Sigma$ immersed in $\E$ with constant mean curvature $H\in\R$; for instance, if $\Sigma$ is not a torus, then $H=0$ and $\Sigma$ is an entire minimal graph.

Although our techniques only produce entire minimal graphs when the base surface is compact, we conjecture that any Killing submersion over a non-compact surface admits a global minimal section. This is related to the existence of global space-like sections with prescribed mean curvature in certain Lorentzian 3-manifolds (see~\cite{LeeMan}).

\noindent\textbf{Acknowledgement.} This research has been partially supported by the Spanish MEC-Feder research project MTM2014-52368-P. The second author was also supported by the EPSRC grant no.\ EP/M024512/1. The authors are grateful to Hojoo Lee and Joaqu\'{i}n P\'{e}rez for pointing out some insightful remarks leading to the final version of this manuscript.

\section{Killing submersions}\label{sec:Killing-submersions}

Let $\pi:\E\to M$ be a Riemannian submersion from a Riemannian $3$-manifold $\E$ to a Riemannian surface $M$, both of them connected and orientable. A vector $v\in T\E$ will be called \emph{vertical} when $v\in\ker(\mathrm{d}\pi)$ and \emph{horizontal} when $v\in\ker(\mathrm{d}\pi)^\bot$. Recall that $\pi$ is Riemannian if it preserves the length of horizontal vectors. 

\begin{definition}\label{def:killing}
$\pi:\E\to M$ is called a \emph{Killing submersion} if the fibres of $\pi$ are the integral curves of a complete Killing vector field $\xi\in\mathfrak{X}(\E)$ without zeroes.
\end{definition}

It is important to notice that $\xi$ is not unique under these conditions, since multiplying $\xi$ by a non-zero real constant also gives a Killing vector field without zeroes generating the same integral curves. 

Given a Killing submersion $\pi:\E\to M$ and fixing some $\xi$ satisfying Definition~\ref{def:killing}, we can consider the connection $1$-form $\alpha\in\Omega^1(\E)$, $\alpha(X)=\langle X,\xi\rangle$, and the curvature $2$-form $\omega=\tfrac{1}{2}\df\alpha$ given by $\omega(X,Y)=\langle\overline\nabla_X\xi,Y\rangle$ for all $X,Y\in\X(\E)$, being $\overline\nabla$ the Levi-Civita connection in $\E$. Since $\xi$ is Killing, we get that $\omega$ is skew-symmetric, and the function $\tau\in\mathcal{C}^\infty(\E)$ defined as
\begin{equation}\label{eqn:tau}
\tau(p)=\frac{-1}{\|\xi_p\|}\,\omega_p(e_1,e_2),
\end{equation}
where $\{e_1,e_2,\xi_p/\|\xi_p\|\}$ is a positively oriented orthonormal basis of $T_p\E$, does not depend on the choice of $\{e_1,e_2\}$ or $\xi$. Moreover, if $\xi$ is unitary, then $\tau$ satisfies the well-known identity $\overline\nabla_X\xi=\tau X\wedge\xi$ for all $X\in\X(\E)$, so $\tau$ will be called the \emph{bundle curvature} of the Killing submersion, extending previous definitions in the unitary case~\cite{EO,Man14,RST,SV}. The bundle curvature also measures the non-integrability of the horizontal distribution associated to $\pi$, in the sense that $\tau=0$ if and only if $\ker(\mathrm{d}\pi)^\bot$ is integrable (see Equation~\eqref{eqn:tau-model} below). 

The $1$-parameter group of isometries associated to $\xi$ will be denoted by $\{\phi_t\}$, and its elements will be called \emph{vertical translations}. Note that $\phi_t$ is defined for all values of $t\in\R$ since $\xi$ is assumed complete. Due to the fact that $\phi_t:\E\to\E$ is an isometry such that $(\phi_t)_*\xi=\xi$ and $(\phi_t)_*\omega=\omega$, the bundle curvature is constant along the fibres of $\pi$, and so is the \emph{Killing length} $\mu=\|\xi\|\in\mathcal C^\infty(\E)$. It follows that both $\tau$ and $\mu$ induce functions in $M$ that will be also denoted by $\tau,\mu\in\mathcal C^\infty(M)$.

\begin{definition}
Two Killing submersions $\pi:\E\to M$ and $\pi':\E'\to M$, over the same base surface $M$, are \emph{isomorphic} if there exists an isometry $T:\E\to\E'$ such that $\pi'\circ T=\pi$. 
\end{definition}

Given a base surface $M$ and two functions $\tau,\mu\in\mathcal{C}^\infty(M)$, $\mu>0$, our first goal is to classify (up to isomorphism) all Killing submersions over $M$ with bundle curvature $\tau$ and Killing length $\mu$. This goal was achieved in the unitary case, provided that $M$ is simply connected in~\cite{Man14}. Nonetheless, it is important to point out some differences in the non-unitary case:
\begin{itemize}
	\item The fibres of $\pi$ are not geodesic in general. As shown in Equation~\eqref{eqn:levi-civita} below, if $\gamma$ is a unit-speed parametrization of a fibre, then $\overline\nabla_{\gamma'}\gamma'=-\frac{1}{\mu}\overline\nabla\mu$, so geodesic fibres correspond to the critical points of $\mu$. 
	\item If a fibre of $\pi$ has finite length, then all fibres of $\pi$ have finite length, but this length may vary from fibre to fibre. In spite of that, there exists a minimal $\ell>0$ such that $\phi_\ell:\E\to\E$ is the identity map. This value of $\ell$ is the analogue to the constant length of the fibre in the non-unitary case.
\end{itemize}
Note also that we can change the metric in the vertical direction preserving the submersion structure and making the Killing vector field unitary, so results guaranteeing the existence of global sections in the unitary case still hold. For instance, if $M$ is not compact, then any Killing submersion $\pi:\E\to M$ admits a global section~\cite[Section VIII.5]{GHV}. The same conclusion holds if the fibres of $\pi$ have infinite length~\cite[Theorem 12.2]{Ste}. A concise characterization of the existence of global sections is given in Proposition~\ref{prop:existence-sections} below.

Next we exhibit some distinguished examples of Killing submersions.

\begin{example}\label{ex:warped-product}
If $\tau=0$, then the horizontal distribution is integrable and we obtain the warped product $M\times_\mu\R$ with $1$-dimensional fibres, which is the product manifold $M\times\R$ endowed with the Riemannian metric $\pi_M^*(\df s_M^2)+\mu^2\pi_\R^*(\df t^2)$, where $\mu$ is a function not depending on $t$, and $\pi_M$ and $\pi_\R$ are the usual projections. If $\mu$ is constant, we get the Riemannian product space $M\times\R$.
\end{example}

\begin{example}\label{ex:homogeneous-R3}
Every homogeneous $3$-manifold $X$ homeomorphic to $\R^3$ is isometric to the semi-direct product $\R^2\ltimes_A\R$ (for some real $2\times 2$ matrix $A$) equipped with some left-invariant metric (see~\cite{MeeksPerez} for a detailed description of these metrics). This metric can be chosen such that the left-invariant frame
\begin{align*}
 E_1&=\alpha_{11}(z)\partial_x+\alpha_{21}(z)\partial_y,&E_2&=\alpha_{12}(z)\partial_x+\alpha_{22}(z)\partial_y,&E_3&=\partial_z,
\end{align*}
is orthonormal (here $\alpha_{ij}(z)$ denote the entries of the exponential matrix $e^{zA}$ and $(x,y,z)$ represents the usual coordinates in $\R^3$). The vector field $\partial_x$ is right-invariant and Killing, and its integral curves are the fibres of the Killing submersion $(x,y,z)\mapsto(y,z)$. The metric can be expressed as
\[\frac{1}{\alpha_{22}^2+\alpha_{21}^2}\df y^2+\df z^2+\frac{\alpha_{22}^2+\alpha_{21}^2}{(\alpha_{11}\alpha_{22}-\alpha_{12}\alpha_{21})^2}\left(\df x-\frac{\alpha_{11}\alpha_{21}+\alpha_{12}\alpha_{22}}{\alpha_{22}^2+\alpha_{21}^2}\df y\right)^2.\]
The base surface is $\R^2$ equipped with the metric $(\alpha_{22}^2+\alpha_{21}^2)^{-1}\df y^2+\df z^2$, and the bundle curvature and Killing length are determined by
\begin{align*}
\frac{2\tau}{\mu}&=\frac{\partial}{\partial z}\!\left(\frac{\alpha_{11}\alpha_{21}+\alpha_{12}\alpha_{22}}{\alpha_{22}^2+\alpha_{21}^2}\right),&\mu&=\sqrt{\frac{\alpha_{22}^2+\alpha_{21}^2}{\alpha_{11}\alpha_{22}-\alpha_{12}\alpha_{21}}}.
\end{align*}
We emphasize here that different choices of the Killing vector field in $X$ give rise to non-isomorphic Killing submersion structures in $X$.
\end{example}

\begin{example}\label{ex:homogeneous-S3}
If $X$ is an homogeneous $3$-manifold homeomorphic to $\s^3$, then $X$ is isometric to the $3$-dimensional Lie group $\mathrm{SU}(2)$ eqquiped with some left-invariant metric, and any right-invariant $\xi\in\X(X)$ is Killing. If $\xi$ is not identically zero, then it has no zeros and its integral curves are compact~\cite{MeeksMiraPerezRos}. The space of fibres $M$ is topologically $\s^2$ and can be easily endowed with the structure of a Riemannian surface such that the natural projection from $X$ to $M$ is a Killing submersion (it suffices to induce in $M$ the metric in the distribution orthogonal to $\xi$). This submersion is topologically the Hopf fibration $\pi_{\mathrm{H}}:\s^3\to\s^2$.
\end{example}

Examples~\ref{ex:homogeneous-R3} and~\ref{ex:homogeneous-S3} cover all simply connected homogeneous $3$-manifolds, except for the Riemannian products $\s^2(\kappa)\times\R$, $\kappa>0$, which do not admit any Lie group structures (see~\cite[Theorem 2.4]{MeeksPerez}). Nevertheless the usual projection $\s^2(\kappa)\times\R\to\s^2(\kappa)$ is a Killing submersion with $\tau=0$ and constant $\mu$.

\subsection{Killing submersions over a disk} We obtain the following direct generalization of Theorems~2.8 and~4.2 in~\cite{Man14}. We will give a detailed proof since it provides a constructive and explicit way of producing Killing submersions with prescribed bundle curvature and Killing length.

\begin{theorem}[Local classification of Killing submersions]\label{thm:classification-disk}
Let $M$ be a non-compact simply connected Riemannian surface and let $\tau,\mu\in\mathcal C^\infty(M)$, $\mu>0$. Then, there exists a Killing submersion $\pi:\E\to M$ such that
\begin{enumerate}
 \item the fibres of $\pi$ have infinite length,
 \item $\tau$ is the bundle curvature of $\pi$, and
 \item $\mu$ is the length of a Killing field $\xi$ whose integral curves are the fibres of $\pi$.
\end{enumerate}
Moreover, such a Killing submersion $\pi$ is unique up to isomorphism.
\end{theorem}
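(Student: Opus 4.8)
The plan is to reduce the problem to a concrete model on $M \times \R$ and solve the resulting first-order system explicitly, exploiting that $M$ is simply connected and non-compact (hence diffeomorphic to a disk, with vanishing de Rham cohomology in degrees $1$ and $2$). First I would fix an orientation on $M$ and write $J$ for the rotation by $\pi/2$ on $TM$. For a Killing submersion $\pi : \E \to M$ with prescribed $\tau, \mu$, one knows a priori that $\E$ must be diffeomorphic to $M \times \R$ (the bundle is topologically trivial since $M$ is contractible and the fibres will have infinite length), so I would postulate the model $\E = M \times \R$ with coordinate $t$ on the $\R$-factor, Killing field $\xi = \partial_t$, and a metric of the warped/twisted form
\[
  \langle\,\cdot\,,\,\cdot\,\rangle = \pi_M^*(\df s_M^2) + \mu^2\,\bigl(\df t + \pi_M^*\omega\bigr)^2,
\]
where $\omega \in \Omega^1(M)$ is a $1$-form to be determined. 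This is the natural generalization of the metric appearing in Example~\ref{ex:homogeneous-R3}. A direct computation (which I would carry out but not reproduce in detail here) shows that such a metric indeed defines a Killing submersion with Killing field $\partial_t$ of length $\mu$, and that its bundle curvature is $\tau = \tfrac{1}{2\mu}\,\star_M \df(\mu^2\omega)$, i.e.\ $\df(\mu^2\omega) = \tfrac{2\tau}{\mu}\,\df A_M$, where $\df A_M$ is the area form of $M$ and $\star_M$ the Hodge star.

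The existence part then amounts to solving $\df(\mu^2\omega) = \tfrac{2\tau}{\mu}\,\df A_M$ for $\omega$. Since $M$ is simply connected and non-compact, $H^2_{\mathrm{dR}}(M) = 0$, so the closed $2$-form $\tfrac{2\tau}{\mu}\,\df A_M$ is exact; choosing any primitive $1$-form $\beta$ with $\df\beta = \tfrac{2\tau}{\mu}\,\df A_M$ and setting $\omega = \mu^{-2}\beta$ solves the equation. (Concretely, on a disk one can write $\beta$ by integrating along radial paths, which is what makes the construction explicit.) This produces the desired $\pi : \E \to M$ with fibres of infinite length, establishing $(1)$, $(2)$, $(3)$. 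For uniqueness up to isomorphism, I would argue as follows: given two such submersions $\pi, \pi'$, first use that any Killing submersion over a contractible base admits a global section (\cite[Section VIII.5]{GHV} or the remark preceding Example~\ref{ex:warped-product}) to trivialize both as $M \times \R$ with $\xi = \partial_t$; the metric is then forced into the displayed form with some $1$-form $\omega$ (resp.\ $\omega'$), and the bundle curvature and Killing length being equal force $\df(\mu^2\omega) = \df(\mu^2(\omega'))$. Hence $\mu^2(\omega - \omega')$ is a closed $1$-form on the simply connected $M$, so it equals $\df h$ for some $h \in \mathcal{C}^\infty(M)$; the fibre-preserving diffeomorphism $(p,t) \mapsto (p, t - h(p))$ is then readily checked to be an isometry between the two models intertwining the projections, which is precisely an isomorphism of Killing submersions.

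The main obstacle I anticipate is not the topology (that part is soft, thanks to simple connectivity) but the bookkeeping in the metric computation: one must verify carefully that the postulated metric really makes $\partial_t$ a \emph{Killing} field (so that $\mu$ and $\omega$ are $t$-independent, which is built in, but the Killing equation still needs checking against the Levi-Civita connection of the full metric), and one must compute the connection $1$-form $\alpha$, the curvature $2$-form $\omega_{\mathrm{curv}} = \tfrac12\df\alpha$, and extract $\tau$ via Equation~\eqref{eqn:tau} to confirm it matches the prescribed function. A secondary subtlety is showing the fibres have infinite length: this follows because $t$ ranges over all of $\R$ and $\mu > 0$ is the (here genuinely $t$-independent) integrand for arclength along a fibre, so each fibre has infinite length — but one should note that $\mu$ need not be bounded below away from $0$ globally, so the statement is really that \emph{the parameter} $t$ is unbounded, which is what the product model guarantees. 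Everything else — smoothness, completeness of $\xi$, absence of zeroes — is immediate from the explicit form of the model.
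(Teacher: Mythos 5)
Your overall strategy is the same as the paper's: trivialize the submersion as $M\times\R$ using a global section and the flow of $\xi$, note that the metric is then forced into the form $\pi_M^*(\df s_M^2)+\mu^2(\df t+\pi_M^*\omega)^2$ for some $1$-form $\omega$ on $M$, turn the prescription of $\tau$ into an exactness problem for a $2$-form on the simply connected non-compact surface $M$ (solved by radial integration, which is exactly the paper's explicit formula~\eqref{eqn:eta}), and get uniqueness from Poincar\'e's lemma via a vertical shear $(p,t)\mapsto(p,t-h(p))$. However, the one computation you deferred is stated incorrectly, and the error propagates to both halves of the argument. For the metric $\df s_M^2+\mu^2(\df t+\omega)^2$ the connection form is $\alpha=\mu^2(\df t+\omega)$, and in Equation~\eqref{eqn:tau} the curvature $2$-form $\tfrac12\df\alpha$ must be evaluated on a \emph{horizontal} orthonormal pair, i.e.\ on vectors annihilated by $\df t+\omega$; these are not tangent to the slices $t=\mathrm{const}$ when $\omega\neq 0$. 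Since $\df\alpha=\df(\mu^2)\wedge(\df t+\omega)+\mu^2\,\df\omega$ and the first summand vanishes on horizontal vectors, one obtains $\tau=\pm\tfrac{\mu}{2}\star_M\df\omega$, equivalently $\df\omega=\mp\tfrac{2\tau}{\mu}\,\df A_M$, which is precisely the paper's Equation~\eqref{eqn:tau-model} with $\omega=-\lambda(a\,\df x+b\,\df y)$. Your identity $\df(\mu^2\omega)=\tfrac{2\tau}{\mu}\,\df A_M$ differs from this by the term $\df(\mu^2)\wedge\omega$ (it is what you get by pulling $\df\alpha$ back to a slice instead of restricting it to the horizontal distribution) and is false whenever $\mu$ is non-constant; the sign alone would be harmless, the extra term is not.

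Concretely: in the existence part, taking $\omega=\mu^{-2}\beta$ with $\df\beta=\tfrac{2\tau}{\mu}\df A_M$ produces a Killing submersion whose bundle curvature is $\pm\tfrac{\mu}{2}\star_M\df(\mu^{-2}\beta)$, not $\tau$; and in the uniqueness part the correct consequence of equal data is $\df\omega=\df\omega'$, hence $\omega-\omega'=\df h$, which is exactly what makes $(p,t)\mapsto(p,t-h(p))$ an isometry, whereas your conclusion $\mu^2(\omega-\omega')=\df h$ does not, since the shear requires $\omega-\omega'$ itself to be exact and $\mu^{-2}\df h$ need not be. The repair is immediate -- solve $\df\omega=\mp\tfrac{2\tau}{\mu}\df A_M$ for $\omega$ directly -- and after that correction your argument coincides with the paper's proof; but as written the key identity, and hence both the existence and the uniqueness steps, are in error rather than merely a matter of convention.
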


\begin{proof}
The fact that $M$ is non-compact and simply connected means that there is an isometry $\varphi:(\Omega,\df s^2_\lambda)\to M$, where $\Omega\subset\R^2$ is the unit disk or the whole $\R^2$ and $\mathrm{d}s^2_\lambda=\lambda^2(\df x^2+\df y^2)$ for some positive $\lambda\in\mathcal C^\infty(\Omega)$. 

Let us suppose that $\pi:\E\to M$ is a Killing submersion satisfying (1), (2) and (3). The condition (1) yields the existence of a global smooth section $F_0:M\to\E$ (see~\cite[Theorem 12.2]{Ste}), and it also implies that
\[\Psi:\Omega\times\R\to\E,\quad \Psi((x,y),t)=\phi_t(F_0(\varphi(x,y)))\]
is a global diffeomorphism, so the following diagram is commutative
\[
\xymatrix{\Omega\times\R\ar^{\Psi}[rr]\ar_{\pi_1}[d]&&\E\ar^{\pi}[d]\\
\Omega\ar^{\varphi}[rr]&& M
}
\]
where $\pi_1:\Omega\times\R\to\Omega$ is the projection over the first factor. Now we can induce in $\Omega\times\R$ a metric $\mathrm{d}s^2$ making $\Psi$ an isometry, so $\pi_1$ becomes a Killing submersion over $(\Omega,\df s_\lambda^2)$. The orthonormal frame $\{e_1=\frac{1}{\lambda}\partial_x,e_2=\frac{1}{\lambda}\partial_y\}$ in $(\Omega,\df s_\lambda^2)$ lifts via $\pi_1$ to a global frame $\{E_1,E_2\}$ of the horizontal distribution associated to $\pi_1$. Hence $\{E_1,E_2,E_3=\frac{1}{\mu}\partial_t\}$ forms a global orthonormal frame of $\E$. Since $\pi_1(x,y,t)=(x,y)$, there exist $a,b\in\mathcal{C}^\infty(\Omega)$ such that we can write
\begin{equation}\label{eqn:frame}
\begin{aligned}
(E_1)_{(x,y,t)}&=\tfrac{1}{\lambda(x,y)}\,\partial_x+a(x,y)\partial_t,\\
(E_2)_{(x,y,t)}&=\tfrac{1}{\lambda(x,y)}\,\partial_y+b(x,y)\partial_t,\\
(E_3)_{(x,y,t)}&=\tfrac{1}{\mu(x,y)}\,\partial_t.
\end{aligned}
\end{equation}
In other words, the metric $\mathrm{d}s^2$ can be expressed in coordinates as
\begin{equation}\label{eqn:metric}
\df s^2=\lambda^2(\df x^2+\df y^2)+\mu^2\left(\mathrm{d} t-\lambda(a\df x+b\df y)\right)^2.
\end{equation}
Note that $\partial_t=\mu E_3$ represents the vertical Killing vector field, and there is no loss of generality in supposing that $\{E_1,E_2,E_3\}$ is positively oriented in $\E$.

By means of Equation~\eqref{eqn:tau}, the bundle curvature $\tau$ of $\pi_1$ reads
\begin{equation}\label{eqn:tau-model-aux}
\tau=\tfrac{-1}{\mu}\omega(E_1,E_2)=-\langle\nabla_{E_1}E_3,E_2\rangle=\langle\nabla_{E_1}E_2,E_3\rangle.
\end{equation}
Likewise, using $E_2$ rather than $E_1$, we get $\tau=-\langle\nabla_{E_2}E_1,E_3\rangle$, and adding this to~\eqref{eqn:tau-model-aux}, we finally obtain
\begin{equation}\label{eqn:tau-model}
\tau=\tfrac{1}{2}\langle[E_1,E_2],E_3\rangle=\tfrac{\mu}{2\lambda^2}\!\left((\lambda b)_x-(\lambda a)_y\right).
\end{equation}
Note that the Lie bracket that can be easily computed from~\eqref{eqn:frame} as
\begin{equation}\label{eqn:bracket}
\begin{aligned}~
[E_1,E_2]&=\tfrac{\lambda_y}{\lambda^2}E_1-\tfrac{\lambda_x}{\lambda^2}E_2+\tfrac{\mu}{\lambda^2}\left((\lambda b)_x-(\lambda a)_y\right)E_3,\\
[E_1,E_3]&=\tfrac{-\mu_x}{\lambda\mu}E_3,\qquad\qquad [E_2,E_3]=\tfrac{-\mu_y}{\lambda\mu}E_3.
\end{aligned}
\end{equation}
Using this representation of Killing submersions, the proof follows easily.

\textbf{Uniqueness.} Let us suppose that $\pi':\E'\to M$ is another Killing submersion satisfying the same conditions and construct likewise an isometry $\Psi':\Omega\times\R\to\E'$ which induces functions $a',b'\in\mathcal C^\infty(\Omega)$ as above. Since $\tau$ and $\mu$ coincide for both submersions, Equation~\eqref{eqn:tau-model} yields $(\lambda b')_x-(\lambda a')_y=(\lambda b)_x-(\lambda a)_y$. Equivalently, the following identity holds in $\Omega$:
\[(\lambda(b'-b))_x=(\lambda(a'-a))_y.\] 
As $\Omega$ is simply connected, Poincar\'e's lemma guarantees the existence of $d\in\mathcal C^\infty(\Omega)$ such that $\lambda(b'-b)=d_y$ and $\lambda(a'-a)=d_x$. It is not difficult to check that $R:\Omega\times\R\to\Omega\times\R$ given by $R((x,y),t)=((x,y),t-d(x,y))$ satisfies that $T=\Psi'\circ R\circ\Psi^{-1}:\E\to\E'$ is an isometry such that $\pi'\circ T=\pi$.

\textbf{Existence.} We will show an explicit metric realizing the conditions in the statement, whose expression is inspired by the ideas above. Let us take 
\begin{equation}\label{eqn:models}
\E=\bigl(\Omega\times\R,\df s^2=\lambda^2(\df x^2+\df y^2)+\mu^2(\df t+\eta(y\df x-x\df y))^2\bigr),
\end{equation}
where
\begin{equation}\label{eqn:eta}
\eta=\eta(x,y)=\int_0^1\frac{2s\,\tau(sx,sy)\,\lambda(sx,sy)^2}{\mu(sx,sy)}\df s.
\end{equation}
Then $\pi:\E\to(\Omega,\df s_\lambda^2)$ given by $\pi(x,y,z)=(x,y)$ becomes a Riemannian submersion with Killing vector field $\partial_t$ (the coefficients of $\df s^2$ do not depend upon $t$) such that $\|\partial_t\|=\mu$. Finally~\eqref{eqn:models} arises from~\eqref{eqn:metric} for $a=\frac{-y\eta}{\lambda}$ and $b=\frac{x\eta}{\lambda}$. Hence~\eqref{eqn:tau-model} tells us that the bundle curvature associated to~\eqref{eqn:models} is $\frac{\mu}{2\lambda^2}((x\eta)_x+(y\eta)_y)$, which is equal to $\tau$ by a simple calculation using~\eqref{eqn:eta}. 
\end{proof}

\begin{remark}\label{rmk:classification-disk-general}
Condition (1) in the statement is not restrictive at all. If the fibres of $\pi:\E\to M$ have finite length, then $\pi$ can be recovered as a Riemannian quotient of a Killing submersion with fibres of infinite length under an appropriate vertical translation. This is a consequence of Theorem~\ref{thm:classification-other} below. In particular, the total space of a Killing submersion over a disk is diffeomorphic to $\R^3$ or to $\R^2\times\s^1$.

This also implies that the condition (1) in the statement may be replaced by assuming that $\E$ is simply connected.
\end{remark}

\subsection{Killing submersions over the sphere}
The classification result when the base surface is topologically $\s^2$ can be carried out in a similar fashion to the unitary case (see~\cite[Propositions 4.5 and 4.9]{Man14}). The key ingredients are the horizontal lifts of curves and their holonomy properties, which we formulate next.

Let $\pi:\E\to M$ be a Killing submersion and $\alpha:[a,b]\to M$ a curve of class $\mathcal{C}^1$. A horizontal lift of $\alpha$ is a curve $\widetilde\alpha:[a,b]\to\E$ of class $\mathcal{C}^1$ such that $\widetilde\alpha'$ is always horizontal and $\pi\circ\widetilde\alpha=\alpha$ in $[a,b]$. The horizontal lift always exists and it is unique when the point $\widetilde\alpha(a)$ is chosen in the fibre of $\alpha(a)$.

\begin{proposition}\label{prop:holonomia}
Let $\pi:\E\to M$ be a Killing submersion whose fibres have infinite length and let $\alpha:[a,b]\to M$ be a simple $\mathcal{C}^1$-curve bounding an orientable relatively compact open set $O\subset M$. Given a horizontal lift $\widetilde\alpha$ of $\alpha$, there exists a unique $d\in\R$ such that $\phi_d(\widetilde\alpha(a))=\widetilde\alpha(b)$ and it satisfies
\[\left|\int_O\frac{2\tau}{\mu}\right|=|d|.\]
\end{proposition}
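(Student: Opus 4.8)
The plan is to express $d$ as the integral of a connection $1$-form along $\alpha$ and then apply Stokes' theorem to the region $O$, recognizing the right-hand side through Equation~\eqref{eqn:tau-model}.

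First I would settle the existence and uniqueness of $d$. Fix a Killing field $\xi\in\X(\E)$ with $\|\xi\|=\mu$ whose integral curves are the fibres, and let $\{\phi_t\}$ be its flow. Since $\alpha$ bounds $O$ it is a closed curve, hence $\pi(\widetilde\alpha(a))=\alpha(a)=\alpha(b)=\pi(\widetilde\alpha(b))$, so $\widetilde\alpha(a)$ and $\widetilde\alpha(b)$ lie in one fibre $\mathcal F$. As $\xi$ has no zeroes, $\mathcal F$ is a single orbit $\{\phi_t(\widetilde\alpha(a)):t\in\R\}$, so $\widetilde\alpha(b)=\phi_d(\widetilde\alpha(a))$ for some $d\in\R$; and because $\mathcal F$ has infinite length the orbit map $t\mapsto\phi_t(\widetilde\alpha(a))$ is injective, which makes $d$ unique.

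Next I would trivialize $\E$. Since the fibres have infinite length there is a global smooth section $F_0:M\to\E$ (by~\cite[Theorem 12.2]{Ste}, as recalled above), hence $\Psi:M\times\R\to\E$, $\Psi(p,t)=\phi_t(F_0(p))$, is a diffeomorphism with $\Psi_*(\partial_t)=\xi$. Pulling back the metric, the horizontal distribution of $\pi$ becomes $\ker(\df t-\pi_M^*\beta)$ for a globally defined $\beta\in\Omega^1(M)$: reading off the local model~\eqref{eqn:metric} in an isothermal chart $(x,y)$ of $M$ shows that $\beta=\lambda(a\,\df x+b\,\df y)$ there. Consequently a curve $s\mapsto\Psi(\alpha(s),t(s))$ is horizontal exactly when $t'(s)=\beta_{\alpha(s)}(\alpha'(s))$, so if $\widetilde\alpha(a)=\Psi(\alpha(a),t_0)$ then $\widetilde\alpha(b)=\Psi\bigl(\alpha(a),t_0+\int_\alpha\beta\bigr)$; comparing with $\widetilde\alpha(b)=\phi_d(\widetilde\alpha(a))=\Psi(\alpha(a),t_0+d)$ and using injectivity on the fibre gives $d=\int_\alpha\beta$. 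Since $O$ is orientable and relatively compact with $C^1$-boundary $\alpha$, Stokes' theorem yields $d=\pm\int_O\df\beta$, the sign depending on the orientation of $\alpha$ relative to $O$. It then remains to identify $\df\beta$: in an isothermal chart $\df\beta=\bigl((\lambda b)_x-(\lambda a)_y\bigr)\,\df x\wedge\df y$, and Equation~\eqref{eqn:tau-model} gives $(\lambda b)_x-(\lambda a)_y=\tfrac{2\tau}{\mu}\lambda^2$, while $\lambda^2\,\df x\wedge\df y$ is the Riemannian area form $\df A$ of $M$ in that chart; hence the chart-independent identity $\df\beta=\tfrac{2\tau}{\mu}\,\df A$ holds, so $d=\pm\int_O\tfrac{2\tau}{\mu}\,\df A$ and $|d|=\bigl|\int_O\tfrac{2\tau}{\mu}\bigr|$.

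I expect the main difficulty to be bookkeeping rather than any isolated computation: verifying that the $t$-displacement of a horizontal lift equals $\int_\alpha\beta$ with the correct sign, that the local expression $\lambda(a\,\df x+b\,\df y)$ patches to the globally defined connection form of $F_0$, and that Stokes' theorem is legitimately applied (which is fine, as $\overline O$ is a compact $C^1$ surface-with-boundary). Passing to absolute values at the end renders harmless the sign ambiguities coming from the orientation of $\alpha$ versus $O$ and from the choice of $\xi$ versus $-\xi$.
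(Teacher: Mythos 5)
Your proposal is correct and follows essentially the same route the paper indicates: it identifies the holonomy $d$ with the line integral of the connection form of a global section and converts it to an integral of $\tfrac{2\tau}{\mu}$ over $O$ via Stokes' theorem, which is exactly the dual formulation of the paper's sketch (the divergence theorem applied to the divergence-type identity for $\tau$, cf.\ Lemma~\ref{lemma:JZ} and \cite[Proposition 3.3]{Man14}). Your preliminary steps (uniqueness of $d$ from injectivity of the orbit map on infinite-length fibres, the global trivialization $\Psi$ from a Steenrod section, and the chart-independent identity $\df\beta=\tfrac{2\tau}{\mu}\,\df A$ from Equation~\eqref{eqn:tau-model}) are all sound.
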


It is worth highlighting that the proof of Proposition~\ref{prop:holonomia} relies on the divergence theorem applied to the divergence-type expression for $\tau$ given by Equation~\eqref{eqn:tau} (see also~\cite[Proposition 3.3]{Man14}).

\begin{theorem}[Classification of Killing submersions over the sphere]\label{thm:classification-sphere}
Let $M$ be $\s^2$ endowed with some Riemannian metric, and consider $\tau,\mu\in\mathcal C^\infty(\s^2)$, with $\mu>0$. Then there exists a Killing submersion $\pi:\E\to M$ such that
\begin{enumerate}
 \item $\E$ is simply connected,
 \item $\tau$ is the bundle curvature of $\pi$, and
 \item $\mu$ is the length of a Killing field $\xi$ whose integral curves are the fibres of $\pi$.
\end{enumerate}
Moreover, such a Killing submersion $\pi$ is unique up to isomorphism.
\begin{itemize}
 \item [(a)] If $\int_M\frac{\tau}{\mu}=0$, then the length of the fibres of $\pi$ is infinite and $\pi$ is isomorphic to
 \[\pi_1:(\s^2\times\R,\df s^2)\to\s^2,\qquad \pi_1(p,t)=p,\]
 for some Riemannian metric $\mathrm{d}s^2$, with Killing vector field $\xi_{(p,t)}=\partial_t$.
 \item[(b)] If, on the contrary, $\int_M\frac{\tau}{\mu}\neq 0$, then the fibres of $\pi$ have finite length and $\pi$ is isomorphic to the Hopf fibration 
 \[\pi_{\mathrm{H}}:(\s^3,\df s^2)\to\s^2,\qquad \pi_{\mathrm{H}}(z,w)=(2z\bar w,|z|^2-|w|^2),\]
 for some Riemannian metric $\mathrm{d}s^2$ in $\s^3$ with Killing vector field $\xi_{(z,w)}=(iz,iw)$. Here $\s^3$ and $\s^2$ are the unit spheres in $\C^2$ and $\R^3\equiv\C\times\R$, respectively.
\end{itemize}
\end{theorem}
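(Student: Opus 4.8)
The plan is to construct $\pi$ by gluing the local models provided by Theorem~\ref{thm:classification-disk}, and to read off the whole statement from the single number $c:=2\int_M\tfrac{\tau}{\mu}$, whose role as a monodromy is governed by Proposition~\ref{prop:holonomia}. First I would fix two distinct points $p_0,p_1\in\s^2$ and apply Theorem~\ref{thm:classification-disk} to $U_0:=\s^2\setminus\{p_1\}$ and $U_1:=\s^2\setminus\{p_0\}$ (each non-compact and simply connected with the induced metric), obtaining Killing submersions $\pi_i\colon\E_i\to U_i$ with infinite fibres and data $\tau|_{U_i},\mu|_{U_i}$. Each $\E_i$ admits a global section, so over the twice-punctured sphere $A:=U_0\cap U_1$ it is isometric to $A\times\R$ with a metric $\pi^*g_M+\mu^2(\df t+\pi^*\eta_i)^2$, where $\eta_i\in\Omega^1(A)$ satisfies $\df\eta_i=\tfrac{2\tau}{\mu}\,\df A$ (the global version of~\eqref{eqn:metric} and~\eqref{eqn:tau-model}, with $\df A$ the area element of $M$). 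Hence $\eta_0-\eta_1$ is closed on $A$, and Stokes' theorem on the two closed disks of $\s^2$ bounded by a circle $\gamma\subset A$ generating $H_1(A)$ — equivalently, Proposition~\ref{prop:holonomia} — gives $\int_\gamma(\eta_0-\eta_1)=c$.

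Now split into two cases. If $c=0$, the $2$-form $\tfrac{2\tau}{\mu}\,\df A$ is exact on $\s^2$, say $\df\beta=\tfrac{2\tau}{\mu}\,\df A$ with $\beta\in\Omega^1(\s^2)$; then $\E:=\s^2\times\R$ with the globally defined metric $\pi^*g_M+\mu^2(\df t+\pi^*\beta)^2$ and $\xi=\partial_t$ is, by~\eqref{eqn:tau-model}, a Killing submersion with bundle curvature $\tau$, Killing length $\mu$, infinite fibres, and global section $\s^2\times\{0\}$, so (a) holds. If $c\neq0$, set $\ell:=|c|$ and $n:=c/\ell\in\{\pm1\}$; on each $U_i$ the form $\tfrac{2\tau}{\mu}\,\df A$ is exact, $\df\beta_i=\tfrac{2\tau}{\mu}\,\df A$, and on $A$ one has $\beta_0-\beta_1=c\,\theta+\df h$ with $\theta$ the generator of $H^1(A;\R)$ normalised by $\int_\gamma\theta=1$. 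Define $\E$ by gluing $(\s^2\setminus\{p_i\})\times(\R/\ell\Z)$ along $A\times(\R/\ell\Z)$ through $(p,[t])\mapsto(p,[t+h(p)+c\,\vartheta(p)])$, where $\vartheta$ is a multivalued primitive of $\theta$: as $p$ runs once around $\gamma$ the $t$-shift changes by $c=n\ell\equiv0\pmod\ell$, so the identification is well defined, the metrics $\pi^*g_M+\mu^2(\df t+\pi^*\beta_i)^2$ agree under it, and $\E$ is a Killing submersion over $\s^2$ with finite fibres realising $\tau,\mu$. Since the clutching function $\gamma\to\R/\ell\Z$ has degree $n=\pm1$, $\E$ is the circle bundle over $\s^2$ of Euler number $\pm1$, i.e.\ $\s^3$ with $\pi$ topologically the Hopf map, so (b) holds.

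To see that (a) and (b) are exhaustive, mutually exclusive, and give the stated models, I would pin down the fibre length from $c$. If $c=0$ and the fibres were finite, $\E$ would be a circle bundle over $\s^2$ whose Euler number equals $\tfrac1\ell\int_M\tfrac{2\tau}{\mu}\,\df A=c/\ell=0$ (Proposition~\ref{prop:holonomia} applied to two disks covering $\s^2$), so $\E\cong\s^2\times\s^1$, contradicting $\pi_1(\E)=0$; hence the fibres are infinite, $\E\to\s^2$ is an $\R$-bundle, necessarily trivial, and $\pi$ is isomorphic to the projection. If $c\neq0$, Proposition~\ref{prop:existence-sections} forbids a global section while an $\R$-bundle over $\s^2$ would have one, so the fibres are finite, $\E\to\s^2$ is a circle bundle, and $\pi_1(\E)=0$ forces Euler number $\pm1$, hence $\E\cong\s^3$ with $\pi$ topologically the Hopf fibration. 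For uniqueness: in case (a) two such submersions are $\s^2\times\R$ with metrics $\pi^*g_M+\mu^2(\df t+\pi^*\beta)^2$ and $\pi^*g_M+\mu^2(\df t+\pi^*\beta')^2$, and since $\beta-\beta'$ is closed on $\s^2$ it equals $\df f$, so $(p,t)\mapsto(p,t-f(p))$ is the required isomorphism; in case (b) both have fibre length $\ell=|c|$ (the Euler number being $\pm1=c/\ell$), so deleting the fibre over a point and invoking the uniqueness in Theorem~\ref{thm:classification-disk} together with Remark~\ref{rmk:classification-disk-general} (which presents each restriction as the quotient by $\phi_\ell$ of the unique infinite-fibre model over the disk) yields an isomorphism over $\s^2$ minus a point, which extends by continuity, using compactness of $\E$ and $\E'$, to an isomorphism over all of $\s^2$.

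The step I expect to be the main obstacle is the consistent gluing in case (b) and the ensuing identification $\E\cong\s^3$: one must verify that the monodromy of the connection part $c\,\vartheta$ is an integer multiple of the fibre length — precisely the relation $c=n\ell$ extracted from Proposition~\ref{prop:holonomia} — and that simple connectivity of $\E$ forces that integer to be $\pm1$; the companion technical point is justifying that the disk-isomorphism used in the uniqueness argument extends across the removed fibre.
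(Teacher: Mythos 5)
Your proposal is correct and follows essentially the route the paper intends: the paper only sketches this proof, deferring to the unitary case of \cite{Man14} and to Proposition~\ref{prop:holonomia}, and that argument is precisely your two-chart gluing over punctured spheres with the holonomy $c=2\int_M\tau/\mu$ serving as clutching data, giving the trivial bundle when $c=0$ and the Hopf fibration with fibre period $|c|$ when $c\neq 0$. Only cosmetic care is needed with the orientation convention (with the paper's choices in~\eqref{eqn:tau-model} one gets $\df\beta=-\tfrac{2\tau}{\mu}\,\df A$) and with invoking Proposition~\ref{prop:holonomia} modulo the fibre period in the finite-fibre steps, which is supplied by Remark~\ref{rmk:classification-disk-general} and Theorem~\ref{thm:classification-other} as you indicate.
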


\begin{remark}\label{rmk:classification-sphere-general}
If condition (1) in the statement of Theorem~\ref{thm:classification-sphere} is dropped, then we obtain a quotient under a vertical translation. This follows from Theorem~\ref{thm:classification-other} below. In the trivial case, the total space of a Killing submersion over a sphere is diffeomorphic to $\s^2\times\R$ or $\s^2\times\s^1$. In the non-trivial case the quotient must be taken under a translation of appropriate length, from where the total space must be diffeomorphic to $\s^3$ or to the lens space $L(n,1)$, $n\geq 2$. Note that these quotients of $\s^3$ are orientable ($L(2,1)$ is nothing but the real projective space $\mathbb{RP}^3$).
\end{remark}

\subsection{Killing submersions over a non-simply connected surface}

Let $\pi:\E\to M$ be a Killing submersion over an arbitrary orientable surface $M$ with bundle curvature $\tau$ and Killing length $\mu$. Our goal is to show that $\pi$ can be regarded as the quotient of a Killing submersion over a simply connected surface, which have been classified in the previous sections.

Let $\rho:\widetilde M\to M$ be the Riemannian universal covering of $M$, and consider $\widetilde\tau=\tau\circ\rho$ and $\widetilde\mu=\mu\circ\rho$ the lift to $\widetilde M$ of $\tau$ and $\mu$, respectively. In view of Theorems~\ref{thm:classification-disk} and~\ref{thm:classification-sphere}, there exists a unique Killing submersion $\widetilde\pi:\widetilde\E\to\widetilde M$ with bundle curvature $\widetilde\tau$, Killing length $\widetilde\mu$, and such that $\widetilde\E$ is simply connected. Let us denote by $\mathrm{Aut}(\rho)$ the group of automorphisms of $\rho$ (i.e., $\mathrm{Aut}(\rho)$ consists of those isometries $h:\widetilde M\to \widetilde M$ such that $\rho\circ h=\rho$). It is well known that $M$ is the Riemannian quotient of $\widetilde M$ under the properly discontinuous action of $\mathrm{Aut}(\rho)$. 

We will use $\mathrm{Aut}(\rho)$ to construct a group $G$ acting properly discontinuously by isometries on $\widetilde\E$, and such that $\E=\widetilde\E/G$, so we will need a way of producing isometries in the total space $\widetilde\E$ by lifting isometries in $\mathrm{Aut}(\rho)$. We will omit the proof of the following lemma since it is a direct generalization of~\cite[Theorem~2.8]{Man14}.

\begin{lemma}\label{lemma:levantamiento-isometrias}
Given $h\in\mathrm{Aut}(\rho)$ and $p,q\in\widetilde\E$ such that $h(\widetilde\pi(p))=\widetilde\pi(q)$, there exists a unique isometry $f:\widetilde\E\to\widetilde\E$ such that $\widetilde\pi\circ f=h\circ\widetilde\pi$ and $f(p)=q$.
\end{lemma}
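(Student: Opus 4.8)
The statement to prove is Lemma~\ref{lemma:levantamiento-isometrias}, which says that an automorphism $h$ of the covering $\rho:\widetilde M\to M$ lifts uniquely to an isometry $f:\widetilde\E\to\widetilde\E$ of the total space with $\widetilde\pi\circ f=h\circ\widetilde\pi$ and a prescribed value $f(p)=q$ on one point. The plan is to use the explicit local models for Killing submersions with prescribed bundle curvature and Killing length obtained in Theorem~\ref{thm:classification-disk} (and Theorem~\ref{thm:classification-sphere}), together with a uniqueness-by-analytic-continuation / monodromy argument, exactly as in the unitary case \cite[Theorem~2.8]{Man14}. The key observation is that $h$ is an isometry of $\widetilde M$ preserving $\widetilde\tau$ and $\widetilde\mu$ (since these descend to $M$, so $\widetilde\tau\circ h=\tau\circ\rho\circ h=\tau\circ\rho=\widetilde\tau$ and likewise for $\widetilde\mu$). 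Hence the pullback submersion $h^*\widetilde\pi$ is again a Killing submersion over $\widetilde M$ with the same bundle curvature and Killing length as $\widetilde\pi$, and $\widetilde\E$ simply connected; the uniqueness clauses of Theorems~\ref{thm:classification-disk} and~\ref{thm:classification-sphere} then produce an isomorphism $\widetilde\E\to\widetilde\E$ covering $h$.

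Concretely, I would proceed as follows. First I fix $\xi$ realizing $\widetilde\mu$ on $\widetilde\E$ and note $h$ preserves $(\widetilde\tau,\widetilde\mu)$. Then I form the new Killing submersion $\widetilde\pi'=h^{-1}\circ\widetilde\pi:\widetilde\E\to\widetilde M$: this is a Riemannian submersion because $h$ is an isometry of the base, its fibres are the integral curves of the same $\xi$ (a complete Killing field without zeros), so it is a Killing submersion; its Killing length is $\widetilde\mu\circ h=\widetilde\mu$ and, since the curvature $2$-form is intrinsic to $\widetilde\E$ and unchanged, its bundle curvature is $\widetilde\tau\circ h^{-1}=\widetilde\tau$. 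By the uniqueness part of Theorem~\ref{thm:classification-disk} when $\widetilde M$ is a disk/$\R^2$ (resp. Theorem~\ref{thm:classification-sphere} when $\widetilde M$ is $\s^2$), there is an isometry $f_0:\widetilde\E\to\widetilde\E$ with $\widetilde\pi'\circ f_0=\widetilde\pi$, i.e.\ $\widetilde\pi\circ f_0=h\circ\widetilde\pi$. Finally I adjust $f_0$ by a vertical translation $\phi_s$ to meet the normalization $f(p)=q$: since $\widetilde\pi(f_0(p))=h(\widetilde\pi(p))=\widetilde\pi(q)$, the points $f_0(p)$ and $q$ lie on the same fibre, so there is $s\in\R$ with $\phi_s(f_0(p))=q$; set $f=\phi_s\circ f_0$, which still satisfies $\widetilde\pi\circ f=h\circ\widetilde\pi$ because vertical translations preserve fibres.

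For uniqueness of $f$ I would argue that if $f_1,f_2$ are two such lifts, then $g=f_2^{-1}\circ f_1$ is an isometry of $\widetilde\E$ with $\widetilde\pi\circ g=\widetilde\pi$ and $g(p)=p$; such a $g$ maps each fibre to itself, hence on each fibre it is a translation by some locally constant amount, and the set where this amount equals $0$ is open, closed and nonempty (it contains the fibre through $p$), so by connectedness $g=\mathrm{id}$. Alternatively, one invokes that a Killing submersion is determined up to the choice of a global section and a fibre-wise offset, and both are pinned down by $\widetilde\pi\circ f=h\circ\widetilde\pi$ together with $f(p)=q$. I expect the main obstacle to be purely bookkeeping: checking that the pulled-back submersion genuinely has the same bundle curvature (one must be careful that $\tau$ is defined via the intrinsic curvature $2$-form of $\widetilde\E$, so it is automatically preserved by composing the base projection with a base isometry), and handling the disk and sphere cases uniformly — but since this is a direct transcription of \cite[Theorem~2.8]{Man14} with $\mu$ carried along, no genuinely new difficulty arises, which is exactly why the paper omits the proof.
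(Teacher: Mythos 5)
The paper offers no written proof of this lemma (it is delegated to a ``direct generalization'' of \cite[Theorem~2.8]{Man14}), so your argument has to stand on its own. The existence half does: since $\widetilde\tau=\tau\circ\rho$ and $\widetilde\mu=\mu\circ\rho$ with $\rho\circ h=\rho$, both are $h$-invariant; $h^{-1}\circ\widetilde\pi$ is again a Killing submersion over $\widetilde M$ with the same fibres, the same Killing length and the same bundle curvature as base functions; and the uniqueness clauses of Theorem~\ref{thm:classification-disk} (applicable because $\widetilde\E$ is simply connected, cf.\ Remark~\ref{rmk:classification-disk-general}) and Theorem~\ref{thm:classification-sphere} produce an isometry $f_0$ with $\widetilde\pi\circ f_0=h\circ\widetilde\pi$, which you then correct by a vertical translation to achieve $f(p)=q$. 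This is a legitimate and efficient route.

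The uniqueness half has a genuine gap at the step ``$g$ maps each fibre to itself, hence on each fibre it is a translation''. All that $\widetilde\pi\circ g=\widetilde\pi$ gives is that $dg$ preserves the vertical distribution and the Killing length, so $dg(\xi)=\pm\xi$; the sign $-1$ is not excluded by anything you wrote, and in that case $g$ restricted to the fibre through $p$ is a reflection fixing $p$, not a translation. This is not a fussy point: if $\tau\equiv0$, so that $\widetilde\pi$ is the warped product $\widetilde M\times_\mu\R$ of Example~\ref{ex:warped-product}, then $(x,t)\mapsto(h(x),t)$ and $(x,t)\mapsto(h(x),-t)$ are both isometries satisfying $\widetilde\pi\circ f=h\circ\widetilde\pi$ and sending $p=(x_0,0)$ to $q=(h(x_0),0)$, so uniqueness read literally fails. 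The statement has to be understood with the extra normalization, present in \cite[Theorem~2.8]{Man14} and implicit in the remark following the lemma, that $f$ preserve the orientation of $\widetilde\E$, equivalently $df_p(\xi_p)=\xi_q$. With that normalization uniqueness is immediate and needs no open--closed argument: $g=f_2^{-1}\circ f_1$ fixes $p$ with $dg_p(\xi_p)=\xi_p$, and $dg_p$ is the identity on the horizontal plane because it preserves horizontality and $d\widetilde\pi_p\circ dg_p=d\widetilde\pi_p$; hence $dg_p=\mathrm{id}$ and $g=\mathrm{id}$ by rigidity of isometries of a connected manifold. (When $\widetilde\tau\not\equiv0$ the fibre-reversing alternative is excluded automatically, since such an $f$ reverses the orientation of $\widetilde\E$ and would force $\widetilde\tau\circ h=-\widetilde\tau$.) Note also that your phrase ``locally constant amount'' is itself unjustified as written; once $g_*\xi=\xi$ is known it follows, e.g., from uniqueness of horizontal lifts, but it is not needed in the corrected argument.
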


\begin{remark}
Given a Killing submersion $\pi:\E\to M$ with Killing vector field $\xi$, an isometry $f$ preserving the direction $\xi$ is called a \emph{Killing isometry}. It is straightforward that any Killing isometry $f$ induces an isometry $h:M\to M$ such that $\tau\circ h=\pm\tau$ and such that $\mu\circ h=a\mu$ for some constant $a\neq 0$. Lemma~\ref{lemma:levantamiento-isometrias} ensures that given the isometry $h$ preserving $\tau$ and $\mu$, $f$ can be recovered up to composing with a vertical translation (indeed, it can be shown that the isometry $f$ in Lemma~\ref{lemma:levantamiento-isometrias} preserves the orientation in $\widetilde\E$).
\end{remark}

\begin{theorem}\label{thm:classification-other}
Let $\pi:\E\to M$ be a Killing submersion, and let $\rho:\widetilde M\to M$ and $\sigma:\widetilde\E\to\E$ be the universal Riemannian covering maps of $M$ and $\E$, respectively. 
\begin{enumerate}

\item[(a)] There exist a Killing submersion $\widetilde\pi:\widetilde\E\to\widetilde M$ such that $\rho\circ\widetilde\pi=\pi\circ\sigma$.
\item[(b)] There exists a group $G$ of Killing isometries acting properly discontinuously on $\widetilde\E$ such that $\E=\widetilde\E/G$.
\end{enumerate}
Moreover, each $f\in G$ is associated to some $h\in\mathrm{Aut}(\rho)$, so the diagram
\[\xymatrix{
	\widetilde\E\ar^{f}[rr]\ar_{\widetilde\pi}[d]&&\widetilde\E\ar^{\sigma}[rr]\ar_{\widetilde\pi}[d]&&\E\ar^{\pi}[d]\\
	\widetilde M\ar^{h}[rr]&&\widetilde M\ar^{\rho}[rr]&&M
}\]
is commutative. If the fibres of $\pi$ have infinite length, this correspondence between $G$ and $\mathrm{Aut}(\rho)$ is bijective. Otherwise, any two isometries associated to $h$ differ in a vertical translation of length a multiple of the length of the fibre.
\end{theorem}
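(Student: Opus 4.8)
The plan is to build $\widetilde\pi$ and $G$ directly from the classification over simply connected surfaces and the lifting lemma. First I would observe that $\widetilde M$, the universal Riemannian covering of $M$, is either topologically $\R^2$ or $\s^2$, so Theorems~\ref{thm:classification-disk} and~\ref{thm:classification-sphere} provide a unique Killing submersion $\widetilde\pi:\widetilde\E'\to\widetilde M$ with bundle curvature $\widetilde\tau=\tau\circ\rho$, Killing length $\widetilde\mu=\mu\circ\rho$, and $\widetilde\E'$ simply connected. I would then check that the composition $\pi\circ\sigma:\widetilde\E\to M$ factors through $\widetilde M$: since $\sigma$ is a covering and the fibres of $\pi$ are connected, $\pi\circ\sigma$ is a submersion whose fibres are the $\sigma$-preimages of the fibres of $\pi$, hence connected, and so it induces a well-defined smooth map to $\widetilde M$; more carefully, fixing a global vertical Killing field and using that $\widetilde\E$ is simply connected, one identifies $\widetilde\E$ with the total space $\widetilde\E'$ of the abstract submersion $\widetilde\pi$ by the uniqueness statements, giving (a).

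Next I would establish (b). For each $h\in\mathrm{Aut}(\rho)$, the functions $\tau$ and $\mu$ are $\mathrm{Aut}(\rho)$-invariant (they descend from $M$), so $\widetilde\tau\circ h=\widetilde\tau$ and $\widetilde\mu\circ h=\widetilde\mu$; by Lemma~\ref{lemma:levantamiento-isometrias}, picking any $p\in\widetilde\E$ and any $q$ in the fibre over $h(\widetilde\pi(p))$, there is an isometry $f:\widetilde\E\to\widetilde\E$ with $\widetilde\pi\circ f=h\circ\widetilde\pi$. I would let $G$ be the set of all such lifts, over all $h\in\mathrm{Aut}(\rho)$ and all admissible choices of $q$. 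One must verify $G$ is a group: composition of lifts of $h_1,h_2$ is a lift of $h_1\circ h_2$, and inverses of lifts are lifts of inverses, both by the uniqueness clause of Lemma~\ref{lemma:levantamiento-isometrias}. Then $G$ fits in a short exact sequence $1\to K\to G\to\mathrm{Aut}(\rho)\to 1$, where the kernel $K$ consists of Killing isometries covering $\mathrm{id}_{\widetilde M}$, i.e.\ fibre-preserving Killing isometries fixing the base: by Lemma~\ref{lemma:levantamiento-isometrias} again such an isometry is determined by where it sends one point in one fibre, and since it preserves $\xi$ and the metric it must be a vertical translation $\phi_s$; the translations that lie in $K$ form a subgroup of $\R$ which is trivial when the fibres have infinite length and $\ell\Z$ (for the structural constant $\ell$ from the excerpt) when they have finite length $\ell$. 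This yields exactly the bijection/"differ by a vertical translation" dichotomy in the moreover clause.

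The remaining point is that $\E=\widetilde\E/G$. Here I would argue that $G$ acts properly discontinuously and freely: freeness and proper discontinuity of the $\mathrm{Aut}(\rho)$-part transfer upward because $\widetilde\pi$ is proper on compact subsets transverse to the fibres, and the $K$-part acts properly discontinuously on each fibre (either trivially or as $\ell\Z$ on a circle). Then $\widetilde\E/G$ is a manifold carrying an induced Killing submersion over $\widetilde M/\mathrm{Aut}(\rho)=M$ with bundle curvature $\tau$ and Killing length $\mu$; I would identify this quotient with $\E$ by noting the covering $\sigma:\widetilde\E\to\E$ is exactly the quotient by the subgroup of deck transformations, and every deck transformation of $\sigma$ is a Killing isometry covering some $h\in\mathrm{Aut}(\rho)$ (it descends to $M$ and preserves $\pi$), hence lies in $G$; conversely $G$ cannot be larger than the deck group because any $f\in G$ covering $\mathrm{id}_M$ must descend to a Killing isometry of $\E$ covering $\mathrm{id}_M$, which forces $f$ into the deck group. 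Commutativity of the displayed diagram is then automatic from the constructions.

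I expect the main obstacle to be the careful bookkeeping in the short exact sequence $1\to K\to G\to\mathrm{Aut}(\rho)\to 1$ and, relatedly, pinning down that $G$ is precisely the deck group of $\sigma$ rather than merely containing it or being contained in it; this requires using the uniqueness part of Lemma~\ref{lemma:levantamiento-isometrias} twice (once to lift from $M$ and once to push down to $\E$) and keeping track of vertical translations when the fibres are circles. The proper discontinuity of the $G$-action, though intuitively clear, also needs a clean argument combining the proper discontinuity of $\mathrm{Aut}(\rho)$ on $\widetilde M$ with the behaviour along fibres, and this is where I would be most careful to avoid hand-waving.
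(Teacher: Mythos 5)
There is a genuine gap, and it sits exactly at the point you flagged as delicate: your group $G$ is too big. You define $G$ as the set of \emph{all} lifts of \emph{all} $h\in\mathrm{Aut}(\rho)$, over all admissible choices of the target point $q$ in the fibre of $h(\widetilde\pi(p))$. Taking $h=\mathrm{id}_{\widetilde M}$ and letting $q$ run over the whole fibre, Lemma~\ref{lemma:levantamiento-isometrias} produces every vertical translation $\widetilde\phi_s$ of $\widetilde\E$, so your kernel $K$ is the full one-parameter group of vertical translations (a copy of $\R$, or of a circle), not the discrete group ``trivial or $\ell\Z$'' you claim; this contradicts your own short exact sequence bookkeeping. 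Consequently $G$ is not discrete, cannot act properly discontinuously, and $\widetilde\E/G$ collapses the fibres and gives $M$ rather than $\E$. Your attempted rescue (``any $f\in G$ covering $\mathrm{id}_M$ must descend to a Killing isometry of $\E$ covering $\mathrm{id}_M$, which forces $f$ into the deck group'') does not work: a Killing isometry of $\E$ covering $\mathrm{id}_M$ is a vertical translation $\phi_t$ of $\E$, which is in general not the identity, so the corresponding lift is not a deck transformation of $\sigma$.

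The missing idea is the holonomy normalization that the paper uses to select \emph{which} lift of each $h$ is allowed. One fixes base points $p_0\in\widetilde\E$ and $q_0\in\E$ with $\pi(q_0)=\rho(\widetilde\pi(p_0))$, joins $\widetilde\pi(p_0)$ to $h(\widetilde\pi(p_0))$ by a path $\gamma$ in $\widetilde M$, projects it to a closed loop $\alpha=\rho\circ\gamma$ in $M$, lifts $\alpha$ horizontally to $\E$ starting at $q_0$, and records the set $A(h)=\{t\in\R:\phi_t(q_0)=\widehat\alpha(1)\}$ of vertical offsets realized by that loop in $\E$. Only for $t\in A(h)$ does one invoke Lemma~\ref{lemma:levantamiento-isometrias}, with the initial condition $f(p_0)=\widetilde\phi_{-t}(\widehat\gamma(1))$, where $\widehat\gamma$ is the horizontal lift of $\gamma$ in $\widetilde\E$ through $p_0$; independence of the chosen path $\gamma$ is exactly what Proposition~\ref{prop:holonomia} provides. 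This pins the vertical ambiguity down to $\{t:\phi_t(q_0)=q_0\}$, which is trivial when the fibres of $\pi$ have infinite length and $\ell\Z$ when they are circles of length $\ell$ --- producing both the proper discontinuity of $G$, the identification $\widetilde\E/G=\E$ (whence $\sigma$ is the quotient projection, which also settles part (a) without your separate factorization argument), and the bijection/vertical-translation dichotomy in the ``moreover'' clause. Without this normalization none of these conclusions follow from your construction.
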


\begin{proof}
Let us consider $\widetilde\pi:\widetilde\E\to\widetilde M$ as defined above. It suffices to construct the group $G$ acting properly discontinuously on $\widetilde\E$ and such that $\widetilde\E/G=\E$, so the projection to the quotient $\sigma:\widetilde\E\to\E$ will enjoy the desired properties.

Let us fix $p_0\in\widetilde\E$ and $q_0\in\E$ with $\pi(q_0)=\rho(\widetilde\pi(p_0))$. Given $h\in\mathrm{Aut}(\rho)$, we will begin by defining the Killing isometries associated to $h$. In order to find the initial conditions required by Lemma~\ref{lemma:levantamiento-isometrias}, horizontal lifts come in handy.

Let $\gamma:[0,1]\to\widetilde M$ be a smooth path with $\gamma(0)=\widetilde\pi(p_0)$ and $\gamma(1)=h(\widetilde\pi(p_0))$. Then $\alpha=\rho\circ\gamma$ is a closed curve in $M$ with $\alpha(0)=\alpha(1)=\rho(\widetilde\pi(p_0))$. Let us consider $\widehat\alpha:[0,1]\to\E$ the horizontal lift of $\alpha$ such that $\widehat\alpha(0)=q_0$, and take 
\[A(h)=\{t\in\R:\phi_t(q_0)=\widehat\alpha(1)\},\] where $\{\phi_t\}$ are the vertical translations in $\E$. For each $t\in A(h)$, Lemma~\ref{lemma:levantamiento-isometrias} yields the existence of an isometry $f:\widetilde\E\to\widetilde\E$ such that $\widetilde\pi\circ f=h\circ\widetilde\pi$ and $f(p_0)=\widetilde\phi_{-t}(\widehat\gamma(1))$, where $\{\widetilde\phi_t\}$ is the group of vertical translations in $\widetilde\E$ and $\widehat\gamma:[0,1]\to\widetilde\E$ is the horizontal lift of $\gamma$ such that $\widehat\gamma(0)=p_0$. This choice of the initial condition is motivated by the fact that the vertical distance between the endpoints of the horizontal lift of a closed curve must be preserved by these (local) isomorphisms of Killing submersions.

We define $G$ as the set of all the Killing isometries $f$ constructed above when $t\in A(h)$ and $h\in\mathrm{Aut}(\rho)$. It is easy to check that $G$ does not depend on the choice of the paths $\gamma$ as a consequence of Proposition~\ref{prop:holonomia}. Moreover, $G$ is a group acting properly discontinuously on $\widetilde\E$, with quotient space $\widetilde\E/G=\E$. Observe that, if there are two isometries $f_1$ and $f_2$ associated to the same $h$, then $f_1\circ f_2^{-1}=\widetilde\phi_t$ for some $t\in\R$ such that $\phi_t(q_0)=q_0$. Hence $t=0$ if the fibres have infinite length (so $f_1=f_2$) or $\widetilde\phi_t$ is a vertical translation of length a multiple of the length of the fibre, if this length is finite.
\end{proof}

Theorem~\ref{thm:classification-other} yields a constructive way of producing non-simply connected Killing submersions as Riemannian quotients of the simply connected ones by subgroups of Killing isometries. The simplest examples with non-trivial topology arise as quotients of the Heisenberg group, which we analyse next.

\begin{example}[Non-uniqueness]\label{ex:non-uniqueness}
Let us consider the Heisenberg group $\Nil(\tau)$, which admits a structure of Killing submersion over the Euclidean plane $\R^2$ with constant $\tau>0$ and $\mu=1$, so it is isometric to $\R^3$ with the metric given by~\eqref{eqn:models} for $\lambda=\mu$ and $\eta=\tau$. 

The isometries of $\Nil(\tau)$ defined by
\begin{align*}
f_1(x,y,z)&=(x+1,y,z+\tau y+a),&f_2(x,y,z)&=(x,y+1,z-\tau x+b),
\end{align*}
correspond to the translations $h_1(x,y)=(x+1,y)$ and $h_2(x,y)=(x,y+1)$, respectively. Observe that the commutator $f_1\circ f_2\circ f_1^{-1}\circ f_2^{-1}$ maps $(x,y,z)$ into $(x,y,z+2\tau)$, thus the group $G$ spanned by $f_1$ and $f_2$ has compact quotient $\E=\Nil(\tau)/G$ admitting a Killing submersion structure $\pi:\E\to\mathbb{T}=\R^2/H$, where $H$ is the group of isometries of $\R^2$ spanned by $h_1$ and $h_2$. Theorem~\ref{thm:classification-other} guarantees that these are all Killing submersions over the flat torus $\mathbb T$ with bundle curvature $\tau$ and unit Killing vector field.

Let us consider the closed curve $\alpha:[0,1]\to\mathbb T$, $\alpha(t)=(t,0)$, whose horizontal lift is $\widetilde\alpha(t)=(t,0,0)$. Since $(0,0,0)$ and $(1,0,a)$ are identified by $f_1$, the vertical distance from $\widetilde\alpha(0)$ to $\widetilde\alpha(1)$ is $2\tau-a$. As isomorphisms of Killing submersions preserve the horizontal lift of a curve as well as vertical distances, we conclude that different values of $a$ and $b$ give rise to non-isomorphic Killing submersions over $\mathbb T$ with the same bundle curvature and Killing length.
\end{example}

\section{The mean curvature equation}\label{sec:mean-curvature}

Let $\pi:\mathbb{E}\to M$ be a Killing submersion and fix a Killing vector field $\xi$ whose integral curves are the fibres of $\pi$, which will be supposed to have infinite length throughout this section.

Let $U\subseteq M$ be an open subset such that there exists a smooth section $F_0:U\to\mathbb{E}$. Given $u:U\to\R$, we define the \emph{Killing graph} associated to $u$ with respect to $F_0$ as the surface in $\mathbb E$ parametrized by
\[F_u:U\to\mathbb{E},\quad F_u(p)=\phi_{u(p)}(F_0(p)),\]
where $\{\phi_t\}$ is the $1$-parameter group of isometries associated to $\xi$. Next lemma gives a divergence-type formula for the mean curvature (generalizing the corresponding formula in the unitary case, see~\cite{LR,LeeMan}). We will assume that all functions are smooth in the sequel, despite the fact that most of the following arguments work for $\mathcal C^2$-graphs. 

\begin{lemma}\label{lemma:H}
Given $u\in\mathcal{C}^\infty(U)$, the mean curvature $H$ of $F_u$ with respect to a unit normal vector field $N$ along $F_u$, satisfies (as a function in $M$)
\[2H\mu =\mathrm{div}(\mu\,\pi_*N),\]
where $\mathrm{div}$ denotes the divergence on $M$ and $\mu=\|\xi\|$ is the Killing length.
\end{lemma}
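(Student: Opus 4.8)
The plan is to work in the explicit model of the Killing submersion provided by Theorem~\ref{thm:classification-disk}, so that everything reduces to a coordinate computation on $M$. First I would restrict attention to a simply connected open set $U\subseteq M$ on which a section $F_0$ exists; since the equation to be proved is local on $M$ and both sides are tensorial, this loses no generality. Using the model \eqref{eqn:models}--\eqref{eqn:metric}, write the metric in coordinates $(x,y,t)$ as $\df s^2=\lambda^2(\df x^2+\df y^2)+\mu^2(\df t-\lambda(a\,\df x+b\,\df y))^2$, with vertical Killing field $\xi=\partial_t$ of length $\mu$, and parametrize the Killing graph by $F_u(x,y)=(x,y,u(x,y))$. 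The first step is then to compute the tangent vectors $(F_u)_x=\partial_x+u_x\partial_t$ and $(F_u)_y=\partial_y+u_y\partial_t$, the induced first fundamental form, and a unit normal $N$; the point here is that $N$ is, up to normalization, the metric dual of the $1$-form $\df t-\lambda(a\,\df x+b\,\df y)$ shifted by $\df u$, so that $\pi_*N$ is (a multiple of) the vector field on $M$ with components built from $u_x+\lambda a$ and $u_y+\lambda b$.

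The second step is to recall the variational characterization of $H$: the Killing graph of $u$ has constant mean curvature $H$ if and only if $u$ is a critical point of the area functional \eqref{eqn:area-functional}, and more generally $2H$ equals the first variation of area, which is exactly the Euler--Lagrange operator $\mathrm{div}_M$ applied to the gradient of the area integrand with respect to $\nabla u$. Concretely, if $W(x,y)=\sqrt{1+\mu^2\lambda^{-2}\|\nabla u+\text{(shift)}\|^2}$ denotes the area element divided by the area element of $M$ (cf.\ \eqref{eqn:area-element}), then the weighted divergence structure of the functional forces $2H\,\mathrm{dA}_M=\mathrm{div}_M\!\big(\text{(gradient of integrand)}\big)$, and a direct identification shows the bracketed vector field is precisely $\mu\,\pi_*N$. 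This is the conceptually cleanest route, but one may equally well verify the formula by brute force from the definition $2H=\mathrm{tr}(A)$ using the Koszul formula and the structure equations \eqref{eqn:bracket}; in either case the computation is routine linear algebra once the frame is fixed.

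The third step is to check that the resulting identity is independent of all the auxiliary choices — the section $F_0$, the coordinates $(x,y)$, and the sign of $N$ — so that it descends to a globally well-defined equation on $M$. Independence of $F_0$ follows because changing the section shifts $u$ by a function on $U$, which changes neither $\pi_*N$ nor $H$; independence of coordinates is automatic since $\mathrm{div}$, $\mu$ and $\pi_*N$ are coordinate-free objects; and flipping $N$ flips both $H$ and $\pi_*N$, so the equation is preserved. Patching over an atlas of $M$ then gives the statement as an equation of functions on $M$.

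The main obstacle I anticipate is not any single hard idea but keeping the bookkeeping honest in the first step: one must correctly compute the unit normal in the non-product metric \eqref{eqn:metric} (the cross terms $\lambda(a\,\df x+b\,\df y)$ are easy to mishandle) and correctly push it forward under $\df\pi$, and one must make sure the factor of $\mu$ lands in the right place — it appears because $\|\xi\|=\mu$ is non-constant, which is exactly the new feature beyond the unitary case treated in~\cite{LR,LeeMan}. Verifying that the weight in the divergence is $\mu$ and not, say, $\mu^2$ or $\lambda$ is where the computation must be done with care; everything else is formal.
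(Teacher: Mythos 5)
Your route — work in the local model and obtain the formula as the first-variation/Euler--Lagrange identity of the area of the Killing graph — is genuinely different from the paper's proof, which extends $N$ to the $\phi_t$-invariant unit field $\overline N$ on $\pi^{-1}(U)$, writes $2H=\overline{\mathrm{div}}(\overline N)$, and splits the ambient divergence into a horizontal part (giving $\mathrm{div}(\pi_*N)$) and a vertical part (giving $\tfrac1\mu\overline N(\mu)$ via the Killing equation); that argument is frame-based, coordinate-free and short. Your variational route can be made rigorous, but as written it has a gap exactly where the content of the lemma lies. The identity you assert in step 2, namely $2H\,\df A_M=\mathrm{div}_M\bigl(\partial W/\partial\nabla u\bigr)\,\df A_M$ with $\partial W/\partial\nabla u=\mu\,\pi_*N$, is not the first-variation formula. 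The deformation $u\mapsto u+s\varphi$ has variation field $\varphi\,\xi$, whose normal component is $\varphi\,\nu$ with $\nu=\langle N,\xi\rangle$, so the geometric side of the first variation is $-\int_\Sigma 2H\varphi\,\nu\,\df A_\Sigma$; since the Jacobian of $\pi_{|\Sigma}$ is $|\nu|/\mu$ (this is exactly the fact used in the proof of Lemma~\ref{lemma:H=0}), one has $|\nu|\,\df A_\Sigma=\mu\,\df A_M$, and the correct identity is $2H\mu=\mathrm{div}(\mu\,\pi_*N)$. As stated, your argument would instead ``prove'' $2H=\mathrm{div}(\mu\,\pi_*N)$: the weight $\mu$ must appear twice, once inside the divergence through $\partial W/\partial\nabla u=\mu\,\pi_*N$ (which you do get) and once multiplying $H$ through $\nu\,\df A_\Sigma=\mu\,\df A_M$ (which is missing). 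This is precisely the bookkeeping you flag at the end as the delicate point, but the one precise formula you commit to at that point misplaces it.

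Two smaller remarks. First, the opening claim of step 2, that the graph has constant mean curvature $H$ if and only if $u$ is a critical point of the area functional, is false for $H\neq0$; criticality of area alone characterizes $H=0$, and for $H\neq 0$ you need $\mathrm{Area}-2H\,\mathrm{Vol}$ or, better, just the pointwise first-variation identity above, which is all the lemma requires. Second, once the normal-component/Jacobian factor is inserted, the rest of your plan (computation of $N$ and $\pi_*N$ in the model \eqref{eqn:metric}, and the independence-of-choices discussion in step 3) is sound; alternatively, the brute-force route you mention, computing $2H=\mathrm{tr}(A)$ in the frame \eqref{eqn:frame} via the connection coefficients \eqref{eqn:levi-civita}, sidesteps the variational subtlety entirely and is essentially the paper's computation written in coordinates.
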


\begin{proof}
It is well known that $2H=\overline{\mathrm{div}}(\overline{N})$, where $\overline{\mathrm{div}}$ is the ambient divergence and $\overline N$ is an extension to $\E$ of $N$. Here we will consider $\overline N$ as the unique extension of $N$ satisfying $(\phi_t)_*\overline{N}=\overline{N}$ for all $t\in\mathbb R$, i.e., $\overline N$ is constant along the fibres of $\pi$. Working locally if necessary, we can consider an orthonormal frame $\{e_1,e_2\}$ in $U$ and its horizontal lift $\{E_1,E_2\}$, which is completed to an orthonormal frame in $\pi^{-1}(U)$ by adding $E_3=\frac{1}{\mu}\xi$, and
\begin{equation}\label{mean-curvature1}
2H=\overline{\mathrm{div}}(\overline N)=\sum_{i=1}^2\langle\overline\nabla_{E_i}\overline N,E_i\rangle+\langle\overline\nabla_{E_3}\overline N,E_3\rangle.
\end{equation}
Since $\pi$ is Riemannian, $E_1$ and $E_2$ are horizontal and $\overline N$ is invariant under vertical translations, we obtain that $\sum_{i=1}^2\langle\nabla_{E_i}\overline N,E_i\rangle=\mathrm{div}_M(\pi_*N)$. Furthermore
\begin{equation}\label{mean-curvature2}
\langle\overline\nabla_{E_3}\overline{N},E_3\rangle=\langle[E_3,\overline{N}],E_3\rangle=\tfrac{1}{\mu^2}\left\langle[\xi,\overline{N}],\xi\right\rangle-\tfrac{1}{\mu}\bigl\langle\overline{N}\bigl(\tfrac{1}{\mu}\bigr)\xi,\xi\bigr\rangle=\tfrac{1}{\mu}\overline{N}(\mu).
\end{equation}
The first equality follows from Koszul formula and the fact that $\langle\overline N,E_3\rangle$ is constant along fibres, the second one follows from the fact that $E_3=\frac{1}{\mu}\xi$. In the third one, we used that $\xi$ is Killing to obtain that
\begin{align*}
\langle[\overline{N},\xi],\xi\rangle&=\langle\overline\nabla_{\overline{N}}\xi,\xi\rangle-\langle\overline\nabla_\xi \overline{N},\xi\rangle=-\langle\overline\nabla_\xi\xi,\overline{N}\rangle-\langle\overline\nabla_\xi \overline{N},\xi\rangle=-\xi(\langle \overline{N},\xi\rangle)=0.
\end{align*}
Plugging~\eqref{mean-curvature2} into~\eqref{mean-curvature1} and seeing $H$ and $\mu$ as functions on $M$ (recall that $\mu$ is constant along fibres), we get
\[2H=\mathrm{div}(\pi_*N)+\tfrac{1}{\mu}\overline N(\mu)=\mathrm{div}(\pi_*N)+\tfrac{1}{\mu}\langle\pi_*N,\nabla\mu\rangle_M=\tfrac{1}{\mu}\mathrm{div}\left(\mu\ \pi_*N\right),\]
so the statement follows.
\end{proof}

Now, in order to work out the term $\pi_*N$, let us consider $\overline u\in\mathcal C^\infty(\E)$ the extension of $u$  by making it constant along fibres, and $d\in\mathcal C^\infty(\E)$ the function that measures the signed vertical Killing distance from a point to the point in $F_0$ lying on the same fibre. In other words, $d$ is determined by the identity $\phi_{d(p)}(F_0(\pi(p)))=p$, for all $p\in\E$. Then the surface parametrized by $F_u$ is a level surface of the function $\overline u-d\in\mathcal C^\infty(\E)$ so we can compute $N=\overline\nabla(\overline u-d)/\|\overline\nabla(\overline u-d)\|$. If we set $Z=\pi_*(\overline\nabla d)$, it is not difficult to get to
\begin{equation}\label{eqn:N-proj}
\pi_*N=\frac{Gu}{\sqrt{\mu^{-2}+\|Gu\|^2}},\qquad Gu=\nabla u-Z,
\end{equation}
where the gradient and the norm are computed in $M$. The denominator in~\eqref{eqn:N-proj} is the area element of $F_u$, i.e., the area of the surface spanned by $u$ is equal to
\begin{equation}\label{eqn:area-element}
 \mathcal{A}(u)=\int_U \sqrt{\mu^{-2}+\|Gu\|^2}.
\end{equation}
The information about the bundle curvature as well as about the choice of the zero section $F_0$ is encoded in $Z$. 

\begin{lemma}\label{lemma:JZ}
The vector field $Z$ satisfies 
\[\mathrm{div}(JZ)=-\frac{2\tau}{\mu},\]
where $J$ denotes a $\frac{\pi}{2}$-rotation in $TM$.
\end{lemma}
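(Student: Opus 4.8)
The plan is to compute the divergence of $JZ$ directly from the definition of $Z=\pi_*(\overline\nabla d)$ and to recognize the resulting expression as the divergence-type formula for $\tau$ given in Equation~\eqref{eqn:tau}. First I would work locally in a simply connected chart $U\subseteq M$ over which the section $F_0$ is defined, and pick the coordinates and frame of the proof of Theorem~\ref{thm:classification-disk}: writing the metric in $\pi^{-1}(U)$ as in~\eqref{eqn:metric}, with $F_0$ being the zero section $t=0$, the vertical distance function becomes simply $d(x,y,t)=t$, so that $\overline\nabla d=\frac{1}{\mu^2}\partial_t-(\text{horizontal correction})$; projecting, $Z=\pi_*(\overline\nabla d)$ is the horizontal vector field whose components come from the off-diagonal terms $\lambda(a\,\df x+b\,\df y)$ of the metric. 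Concretely, in the orthonormal frame $\{e_1=\frac1\lambda\partial_x,e_2=\frac1\lambda\partial_y\}$ on $M$ one finds $Z=-a\,e_1-b\,e_2$ (up to sign conventions to be fixed), where $a,b$ are exactly the functions appearing in~\eqref{eqn:frame}.

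Next I would apply $J$ (the $\tfrac\pi2$-rotation sending $e_1\mapsto e_2$, $e_2\mapsto-e_1$) to get $JZ=-a\,e_2+b\,e_1$, and then compute $\mathrm{div}(JZ)$ using the standard formula for the divergence of a vector field in the conformal metric $\df s_\lambda^2=\lambda^2(\df x^2+\df y^2)$, namely $\mathrm{div}(V^1\partial_x+V^2\partial_y)=\frac{1}{\lambda^2}\big((\lambda^2 V^1)_x+(\lambda^2 V^2)_y\big)$. Writing $JZ$ in the coordinate frame and simplifying, the computation collapses to $\mathrm{div}(JZ)=-\frac{1}{\lambda^2}\big((\lambda b)_x-(\lambda a)_y\big)$, and comparing with Equation~\eqref{eqn:tau-model}, which states $\tau=\frac{\mu}{2\lambda^2}\big((\lambda b)_x-(\lambda a)_y\big)$, yields precisely $\mathrm{div}(JZ)=-\frac{2\tau}{\mu}$. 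Since $\tau$, $\mu$, and $\mathrm{div}(JZ)$ are all intrinsically defined, this local identity glues to the global statement.

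An alternative, coordinate-free route — which I would mention as it clarifies why the formula is natural — is to observe that $Gu=\nabla u-Z$ from~\eqref{eqn:N-proj} and that, by Proposition~\ref{prop:holonomia}, the vertical holonomy of a loop bounding a region $O$ equals $\int_O\frac{2\tau}{\mu}$; since that holonomy is also the line integral of the connection form whose exterior derivative is governed by $Z$, Stokes' theorem forces $\mathrm{div}(JZ)=-\frac{2\tau}{\mu}$ pointwise. Either way, the content is that $Z$ is a local potential-type field for the curvature $2$-form, and $J$ converts the ``curl'' that defines $\tau$ into the ``divergence'' of $JZ$.

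The main obstacle is bookkeeping of signs and orientation conventions: the sign in the definition~\eqref{eqn:tau} of $\tau$, the orientation of $J$, the sign of $d$ (whether $\phi_{d(p)}(F_0(\pi(p)))=p$ or its inverse), and the sign in $Gu=\nabla u-Z$ must all be tracked consistently so that the final constant is exactly $-\frac{2\tau}{\mu}$ and not $+\frac{2\tau}{\mu}$. The differential-geometric content is light once~\eqref{eqn:tau-model} is in hand; essentially all the work is making sure the explicit model of Theorem~\ref{thm:classification-disk} is being used with the zero section $F_0$ matching the section implicit in the definition of $Z$, after which the identity is immediate.
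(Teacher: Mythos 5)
Your proposal follows the paper's proof essentially step for step: pass to the local model of Theorem~\ref{thm:classification-disk} with $F_0$ the zero section, so that $d(x,y,t)=t$, express $Z$ through the functions $a,b$ of~\eqref{eqn:frame}, rotate by $J$, and conclude with the conformal divergence formula together with~\eqref{eqn:tau-model}. The one point you left ``to be fixed'' is in fact a sign error, not a convention: since $E_1(d)=a$, $E_2(d)=b$ and $E_3(d)=\tfrac{1}{\mu}$, one gets $\overline\nabla d=aE_1+bE_2+\tfrac{1}{\mu}E_3$, hence $Z=+a\,e_1+b\,e_2$ (not $-a\,e_1-b\,e_2$); with your choice $Je_1=e_2$, $Je_2=-e_1$ this gives $JZ=-b\,e_1+a\,e_2$ and $\mathrm{div}(JZ)=\tfrac{1}{\lambda^2}\left(-(\lambda b)_x+(\lambda a)_y\right)=-\tfrac{2\tau}{\mu}$, whereas the intermediate expressions you wrote ($Z=-a\,e_1-b\,e_2$, $JZ=b\,e_1-a\,e_2$) would yield $+\tfrac{2\tau}{\mu}$, contradicting the divergence you then assert. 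Once that slip is corrected the argument is exactly the paper's. The alternative route via Proposition~\ref{prop:holonomia} and Stokes' theorem is a reasonable heuristic but runs backwards relative to the paper, where the holonomy statement is itself derived from the divergence-type expression for $\tau$, so the direct local computation is the right proof to give.
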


\begin{proof}
As the computation is local, we can suppose the Killing submersion is the projection over the first factor $\pi_1:\Omega\times\R\to\Omega$, such that $\{e_1=\frac{1}{\lambda}\partial_x, e_2=\frac{1}{\lambda}\partial_y\}$ and $\{E_1,E_2,E_3\}$ given by~\eqref{eqn:frame} are orthonormal frames in $\Omega$ and $\Omega\times\R$, respectively, for some $a,b,\lambda\in\mathcal C^\infty(\Omega)$, $\lambda>0$. Taking the global initial section $F_0:\Omega\to\Omega\times\R$ as $F_0(x,y)=(x,y,0)$, we get that $\overline u(x,y,z)=u(x,y)$ and $d(x,y,z)=z$ in this model. Hence,
\[\overline\nabla d=\sum_{i=1}^3E_i(z)E_i=a E_1+b E_2+\tfrac{1}{\mu}E_3,\]
which gives $Z=\pi_*(\overline\nabla d)=a e_1+b e_2$. As $\{e_1,e_2\}$ is orthonormal (and can be assumed positively oriented), we deduce that $JZ=-be_1+ae_2=-\frac{b}{\lambda}\partial_x+\frac{a}{\lambda}\partial_y$. Using the expression for the divergence in a conformal metric and Equation~\eqref{eqn:tau-model} we finally obtain
\[\mathrm{div}(JZ)=\frac{1}{\lambda^2}\left(-(\lambda b)_x+(\lambda a)_y\right)=-\frac{2\tau}{\mu}.\qedhere\]
\end{proof}

Now we are able to characterize the existence of global sections.

\begin{proposition}[Existence of global sections]\label{prop:existence-sections}
Let $\pi:\E\to M$ be a Killing submersion with bundle curvature $\tau$ and Killing length $\mu$. The submersion $\pi$ admits a global section if and only if either $M$ is non-compact or $M$ is compact and $\int_{M}\frac{\tau}{\mu}=0$.
\end{proposition}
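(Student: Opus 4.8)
The plan is to prove both implications separately, using the universal-cover machinery of Theorem~\ref{thm:classification-other} together with the divergence identity of Lemma~\ref{lemma:JZ}.

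For the ``only if'' direction, suppose $M$ is compact and $\pi$ admits a global section $F_0:M\to\E$. Since the fibres can be assumed to have infinite length (if not, pass to the covering with infinite-length fibres as in Remark~\ref{rmk:classification-disk-general}, which does not affect the existence of a section nor the value of $\int_M\frac{\tau}{\mu}$), the global section allows us to realise $\pi$ globally in the model of the proof of Theorem~\ref{thm:classification-disk}, i.e.\ as $\pi_1:M\times\R\to M$ with $F_0(p)=(p,0)$. Then the vector field $Z=\pi_*(\overline\nabla d)$ is globally defined on $M$, hence so is $JZ$, and Lemma~\ref{lemma:JZ} gives $\mathrm{div}(JZ)=-\frac{2\tau}{\mu}$ on the compact manifold $M$. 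Integrating and applying the divergence theorem (no boundary, since $M$ is compact) yields $\int_M\frac{\tau}{\mu}=0$.

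For the ``if'' direction, the case $M$ non-compact is already known: it follows from \cite[Section VIII.5]{GHV} (or \cite[Theorem~12.2]{Ste} after rescaling the fibre to have infinite length), as recalled in the text. So assume $M$ is compact with $\int_M\frac{\tau}{\mu}=0$. If $M$ is a sphere, this is precisely case (a) of Theorem~\ref{thm:classification-sphere}, which exhibits $\pi$ as (isomorphic to) $\pi_1:\s^2\times\R\to\s^2$, and $p\mapsto(p,0)$ is a global section. If $M$ has positive genus, I would pass to the universal cover $\rho:\widetilde M\to M$ and the associated simply connected Killing submersion $\widetilde\pi:\widetilde\E\to\widetilde M$ from Theorem~\ref{thm:classification-other}; since $\widetilde M$ is a non-compact simply connected surface, Theorem~\ref{thm:classification-disk} provides a global section $\widetilde F_0:\widetilde M\to\widetilde\E$, and the content of the argument is to show that it can be chosen $G$-equivariant, so that it descends to a section of $\pi$. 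Equivalently, using the holonomy language: for each loop $\alpha$ in $M$ the vertical displacement of its horizontal lift equals $\int_O\frac{2\tau}{\mu}$ over a bounding region by Proposition~\ref{prop:holonomia}; the hypothesis $\int_M\frac{\tau}{\mu}=0$ should force this holonomy to vanish on a generating set of $\pi_1(M)$ (killing one handle at a time, rearranging the chosen fundamental domain so the total flux is split consistently), and a submersion with trivial holonomy along every loop admits a global horizontal — in particular global — section.

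The main obstacle is this last point: unlike the sphere, a higher-genus surface has $\pi_1(M)$ generated by several loops, none of which individually bounds, so $\int_M\frac{\tau}{\mu}=0$ does not immediately say that the holonomy around each generator is zero — only that a certain alternating sum vanishes. The correct statement is weaker: one needs a global \emph{section}, not a global horizontal section, so it suffices to find \emph{any} $G$-equivariant choice of base points along the fibres over a fundamental polygon, and the obstruction to gluing the edges of the polygon is a single real number, namely the total holonomy around the boundary of the polygon, which is exactly $\int_M\frac{2\tau}{\mu}=0$ by the divergence theorem applied as in Proposition~\ref{prop:holonomia}. So the right approach is to work with the fundamental polygon directly rather than with a generating set of loops: lift $\pi$ over the (simply connected, non-compact) interior, use Theorem~\ref{thm:classification-disk} to get a section there, and check that the prescribed edge-identifications of $G$ are compatible precisely because the single number measuring the failure to close up is the integral of $\frac{2\tau}{\mu}$ over the whole polygon, which is zero. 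I would phrase the write-up around this fundamental-polygon / total-flux computation, citing Proposition~\ref{prop:holonomia} and Theorem~\ref{thm:classification-other} for the bookkeeping.
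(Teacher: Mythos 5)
The ``only if'' direction and the sphere case are essentially fine (two small remarks: you do not need to pass to an infinite-fibre covering, since $d$ is defined modulo the fibre period and hence $Z=\pi_*(\overline\nabla d)$ is well defined from any global section; and Theorem~\ref{thm:classification-sphere}(a) assumes $\E$ simply connected, so for the general sphere case you should invoke Remark~\ref{rmk:classification-sphere-general} and note that the section of $\s^2\times\R\to\s^2$ descends to the quotient). The genuine gap is in the higher-genus case: the sentence ``the obstruction to gluing the edges of the polygon is a single real number, namely the total holonomy around the boundary of the polygon'' is exactly the statement that needs proof, and your plan does not supply one. A section over the interior of the $4g$-gon can be altered by an arbitrary function, and compatibility has to hold along $2g$ identified edge pairs and at the vertex, so a priori the obstruction is function-valued, not a number. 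Reducing it to one number is the standard obstruction-theoretic argument: first build a section over the $1$-skeleton of $M$, observe that the only remaining obstruction is the vertical winding of the induced loop over the boundary word of the $2$-cell, prove that this winding does not depend on the choices made over the edges (this uses that every edge occurs twice in the relator with opposite orientations, so the drifts of the section relative to horizontal lifts cancel in pairs), and finally identify it with $\frac{1}{\ell}\int_M\frac{2\tau}{\mu}$, where $\ell$ is the fibre period, by a Stokes computation on the polygon. Note that Proposition~\ref{prop:holonomia} cannot be quoted as it stands for this last step: it requires fibres of infinite length and a \emph{simple} closed curve bounding a relatively compact open set, whereas the boundary word of the fundamental polygon is not simple in $M$; moreover the only case not already settled by \cite[Theorem~12.2]{Ste} is precisely that of circle fibres, where the ``single number'' is really the integer Euler number of the circle bundle. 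So your route is workable, but as written the crucial step is asserted rather than proved, and proving it is where all the work lies.

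For comparison, the paper's proof of the converse avoids obstruction theory altogether with a one-cut trick that treats the sphere and higher genus uniformly: since $\int_M\frac{\tau}{\mu}=0$, one can choose a separating regular Jordan curve $\Gamma$ with $\int_{M_1}\frac{\tau}{\mu}=\int_{M_2}\frac{\tau}{\mu}=0$ on the two components (for instance by sliding a small disk from a region where $\tau>0$ to one where $\tau<0$); by Proposition~\ref{prop:holonomia} the curve $\Gamma$ admits a closed horizontal lift $\widetilde\Gamma$, and over each compact surface with boundary $\overline{M_i}$ one takes a section whose boundary values lie in $\widetilde\Gamma$ (no obstruction, since $M_i$ is non-compact), glues the two sections along $\Gamma$ and smooths the result. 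If you want to keep your fundamental-polygon approach you must write out the $1$-skeleton construction, the cancellation of the edge contributions, and the flux identification in full; otherwise the cut-along-a-zero-flux-curve argument is considerably shorter.
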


\begin{proof}
If $M$ is not compact, the existence follows from~\cite[Section VIII.5]{GHV}, so let us suppose that $M$ is compact. If $M$ admits a global section, then $\int_{M}\frac{\tau}{\mu}=0$ as a consequence of Lemma~\ref{lemma:JZ} and the divergence theorem. Conversely, let us assume that $\int_{M}\frac{\tau}{\mu}=0$ and take a regular Jordan curve $\Gamma\subset M$ separating $M$ in two components $M_1$ and $M_2$ such that $\int_{M_1}\frac{\tau}{\mu}=\int_{M_2}\frac{\tau}{\mu}=0$. By Proposition~\ref{prop:holonomia}, $\Gamma$ lifts to a horizontal closed curve $\widetilde\Gamma\subset\E$. Since each component $M_i$, $i\in\{1,2\}$, is not compact, there exist sections $F_i:\overline{M_i}\to\E$ such that $F_i(x)\in\widetilde\Gamma$ for all $x\in\Gamma$. Then $F: M\to\E$ given by $F(x)=F_i(x)$ if $x\in M_i$ is a global continuous section of $\pi$ which can be assumed smooth after a small perturbation around $\Gamma$.
\end{proof}

Before stating the existence of minimal sections, we need the following lemma, which collects some useful information given by the maximum principle for constant mean curvature surfaces, and will be used repeatedly in the sequel.

\begin{lemma}\label{lemma:H=0}
Let $\pi:\E\to M$ be a Killing submersion with bundle curvature $\tau$ and Killing length $\mu$. If $\Sigma$ is a compact surface immersed in $\E$ with constant mean curvature $H$ and everywhere transversal to the vertical direction, then $M$ is compact, $\int_M\frac{\tau}{\mu}=0$ and $\Sigma$ is an entire minimal graph. 

Moreover, if such an entire minimal graph exists, then it is unique up to vertical translations and minimizes area among all entire Lipschitz graphs in $\E$.
\end{lemma}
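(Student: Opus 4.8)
The plan is to first exploit the transversality hypothesis and the standing assumption that fibres have infinite length to reduce to a graph over a compact base, then run a sliding argument with the Killing flow to upgrade this to an \emph{entire} graph, and finally read off $H=0$ from Lemma~\ref{lemma:H} and deduce the ``moreover'' part from a calibration. For the reduction, note that transversality to the vertical direction is precisely the statement that $\pi|_\Sigma$ is a local diffeomorphism; out of the compact surface $\Sigma$ it is proper, hence a finite-sheeted covering onto its image, which is open, closed and nonempty and therefore equals $M$, so $M$ is compact. Since the fibres of $\pi$ have infinite length, $\pi$ admits a global smooth section $F_0\colon M\to\E$ (see~\cite[Theorem~12.2]{Ste}), so $\Psi(p,t)=\phi_t(F_0(p))$ is a diffeomorphism from $M\times\R$ onto $\E$ and the signed vertical distance $d\in\mathcal C^\infty(\E)$ to $F_0$ (characterised by $\phi_{d(p)}(F_0(\pi(p)))=p$) is globally defined and satisfies $d\circ\phi_s=d+s$; moreover the existence of $F_0$ together with Lemma~\ref{lemma:JZ} and the divergence theorem on the closed surface $M$ gives $\int_M\frac\tau\mu=0$, exactly as in the proof of Proposition~\ref{prop:existence-sections}.

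It remains to prove that $\Sigma$ is a section. Orient $\Sigma$ by the unit normal $N$ for which the angle function $\nu=\langle N,\xi\rangle$ is positive; this is possible since $\nu$ is nowhere zero by transversality and $\Sigma$ is connected. As $d$ is bounded on the compact set $\Sigma$, the set $A=\{t\ge 0:\phi_t(\Sigma)\cap\Sigma\ne\emptyset\}$ is bounded, and it is closed because $\Sigma$ is compact, so $t_0=\max A$ exists. If $t_0>0$, then $\phi_{t_0}(\Sigma)$ meets $\Sigma$ while $\phi_t(\Sigma)$ does not for $t>t_0$; at a contact point the two surfaces (each locally embedded by transversality) cannot cross transversally, for a transversal crossing would persist for $t$ slightly larger than $t_0$, so they are tangent with $\Sigma$ lying weakly on one side. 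Their unit normals at the contact point agree, because $\phi_{t_0}$ is an isometry fixing $\xi$ and hence preserves the sign of $\nu$, and both surfaces have constant mean curvature $H$ with respect to this common normal. The strong maximum principle for the constant-mean-curvature equation then forces them to coincide near the contact point, and a standard open-and-closed argument along connected $\Sigma$ propagates this to $\phi_{t_0}(\Sigma)=\Sigma$; but then $\max_\Sigma d=\max_{\phi_{t_0}(\Sigma)}d=\max_\Sigma d+t_0$, impossible for $t_0>0$. Hence $A=\{0\}$, i.e.\ $\phi_t(\Sigma)\cap\Sigma=\emptyset$ for every $t\ne0$. Since the fibres of $\pi$ are the orbits of $\{\phi_t\}$, the surface $\Sigma$ meets each fibre at most once, and exactly once because $\pi|_\Sigma$ is onto; thus $\Sigma=F_u(M)$ for some $u\in\mathcal C^\infty(M)$, an entire graph.

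Integrating the identity $2H\mu=\operatorname{div}(\mu\,\pi_*N)$ of Lemma~\ref{lemma:H} over the closed surface $M$ kills the right-hand side, so $2H\int_M\mu=0$, and since $\mu>0$ we get $H=0$: $\Sigma$ is an entire minimal graph. For uniqueness, if $\Sigma=\operatorname{graph}(f)$ and $\Sigma'=\operatorname{graph}(g)$ are two entire minimal graphs, then $f-g$ attains a maximum $c$ on the compact $M$, so $\Sigma$ lies weakly below $\phi_c(\Sigma')=\operatorname{graph}(g+c)$ and touches it; taking the upward normals the contact normals agree, and the maximum principle again yields $f\equiv g+c$, i.e.\ $\Sigma=\phi_c(\Sigma')$. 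For the minimizing property, let $\overline N$ be the fibre-constant extension to $\E$ of the unit normal of the minimal graph $\Sigma$ (as in the proof of Lemma~\ref{lemma:H}) and set $\omega=\iota_{\overline N}\operatorname{vol}_\E\in\Omega^2(\E)$; then $\mathrm d\omega=\overline{\operatorname{div}}(\overline N)\operatorname{vol}_\E=2H\operatorname{vol}_\E=0$, and $\omega$ has norm at most $1$ on every $2$-plane, with equality precisely on planes orthogonal to $\overline N$. Any entire Lipschitz graph $\Sigma'$ cobounds with $\Sigma$ the region of $\E$ between them, so Stokes' theorem gives $\int_{\Sigma'}\omega=\int_\Sigma\omega=\operatorname{Area}(\Sigma)$, whereas $\operatorname{Area}(\Sigma')\ge\int_{\Sigma'}\omega$; hence $\operatorname{Area}(\Sigma')\ge\operatorname{Area}(\Sigma)$.

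The main obstacle is the sliding step, that is, applying the strong maximum principle for constant-mean-curvature surfaces with care: one must check that at the first contact the surfaces are genuinely ordered rather than transversal, that the chosen normals are compatible so the comparison really takes place at the same value $H$, and, since $\Sigma$ is only assumed immersed, that the local coincidence can be propagated to a global identity of the two surfaces. Everything after ``$\Sigma$ is an entire graph'' is routine, and the calibration argument above is the standard one adapted to the Killing-submersion area element~\eqref{eqn:area-element}.
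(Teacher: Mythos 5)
Your proof is correct and follows essentially the same route as the paper: the finite-covering argument giving compactness of $M$ and $\int_M\frac{\tau}{\mu}=0$, a vertical sliding argument with the strong maximum principle to show the surface is an entire graph, integration of the divergence formula of Lemma~\ref{lemma:H} to obtain $H=0$ and uniqueness, and the foliation by vertical translates for area minimization (your calibration form $\iota_{\overline N}\mathrm{vol}$ is exactly the paper's divergence-theorem-plus-Cauchy--Schwarz computation). The only cosmetic point is that the one-sided ordering at the first contact is better justified not by ruling out transversal crossings but by observing that $\phi_t(\Sigma)\cap\Sigma=\emptyset$ for all $t>t_0$ forces the local graph representations to satisfy $u\le v$ near the contact point, which is the standard one-line fix and matches the paper's (equally terse) use of the maximum principle.
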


\begin{proof}
The absolute value of the Jacobian of $\pi_{|\Sigma}:\Sigma\to M$ equals $\frac{1}{\mu}|\langle N,\xi\rangle|$, where $N$ is a unit normal to the surface and $\xi$ is a vertical Killing vector field. Since $\Sigma$ is compact and transversal to $\xi$, we can assume that  $|\nu|\geq\varepsilon$ for some constant $\varepsilon>0$. As a consequence, $\pi_{|\Sigma}$ is a covering map with a finite number of sheets. In particular $M$ is compact and the bundle is trivial (i.e., it admits a global section), so we deduce that $\int_M\frac{\tau}{\mu}=0$ from Proposition~\ref{prop:existence-sections}. Therefore we will also assume that the fibres of $\pi$ have infinite length without losing generality. 

To prove that $\Sigma$ is an entire graph, let us assume by contradiction that $\pi_{|\Sigma}$ has more than one sheet. Then take $\Sigma'$ a vertical translation of $\Sigma$ with $\Sigma\cap\Sigma'=\emptyset$ and move $\Sigma'$ towards $\Sigma$ till there is a first contact point $p$. The normal vector fields to the surfaces coincide at $p$ because $\nu$ has a global sign. Since there is more than one sheet, the point $p$ comes from two different sheets of $\Sigma$, and the maximum principle implies that $\Sigma$ is invariant by a non-trivial vertical translation, contradicting the fact that it is compact and the fibres have infinite length. 

To prove that $\Sigma$ is minimal, Lemma~\ref{lemma:H} applied to the entire graph $\Sigma$ and the divergence theorem give $\int_M H\mu=0$, where $\mu$ is positive, so $H=0$. Uniqueness up to vertical translations follows from the maximum principle.

Finally, let us show the minimization property. Consider any entire Lipschitz graph $\Sigma'$, and suppose it does not intersect $\Sigma$ by applying a vertical translation. If $N$ and $N'$ stand for unit normal vector fields to $\Sigma$ and $\Sigma'$, respectively ($N'$ is defined almost everywhere), then we can extend $N$ to a unique $\overline N\in\X(\E)$ such that $(\phi_t)_*\overline N=\overline N$ for all $t$, where $\{\phi_t\}$ is the 1-parameter group of vertical translations. Then the ambient divergence $\overline{\mathrm{div}}(\overline N)$ identically vanishes since $\overline N$ is normal to a foliation of $\E$ by minimal surfaces, namely the translations of $\Sigma$. The divergence theorem for $\overline N$ in the Lipschitz region bounded by $\Sigma$ and $\Sigma'$ and Cauchy-Schwarz inequality yield
\[\mathrm{Area}(\Sigma)=\left|\int_\Sigma\langle N,N\rangle\right|=\left|\int_{\Sigma'}\langle N',N\rangle\right|\leq\mathrm{Area}(\Sigma'),\]
with equality if and only if $N'$ is collinear with $N$ almost everywhere, that is to say, if and only if $\Sigma'$ is a vertical translation of $\Sigma$.
\end{proof}

\begin{remark}
Lemma~\ref{lemma:H=0} can be extended to the case the mean curvature of $\Sigma$ does not change sign. Though the maximum principle cannot be applied directly because the mean curvatures of $\Sigma$ and $\Sigma'$ may not be properly ordered at the contact point, we could also consider and a Killing submersion over a Riemannian covering of $M$ where $\Sigma$ lifts to a entire graph. Since the mean curvature of the lifted surface does not change sign either, Lemma~\ref{lemma:H} gives the result.
\end{remark}

\begin{theorem}[Bernstein problem]\label{thm:existence-entire-minimal}
Let $\pi:\E\to M$ be a Killing submersion, and assume that $M$ is compact and $\int_M\frac{\tau}{\mu}=0$, where $\tau$ and $\mu$ denote the bundle curvature and the Killing length, respectively. Then $\E$ admits an entire minimal graph $\Sigma$.
\begin{enumerate}
	\item Up to a vertical translation $\Sigma$ is the the unique entire constant mean curvature graph in $\E$, and minimizes area among all entire graphs in $\E$.
	\item If the fibres of $\pi$ have infinite length, then $\Sigma$ is the unique compact minimal surface immersed in $\E$, up to vertical translations.
\end{enumerate}
\end{theorem}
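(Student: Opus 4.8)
The plan is to split the argument according to whether the fibres of $\pi$ have infinite length (equivalently, whether $M$ is not a sphere or the fibres are non-compact), since the two cases call for genuinely different tools. By Theorem~\ref{thm:classification-other} we may assume the fibres have infinite length, obtaining the general case afterwards by passing to a quotient; so assume first that $\pi$ has fibres of infinite length and a global section $F_0:M\to\E$ (which exists by Proposition~\ref{prop:existence-sections}). The key existence step is to minimize area within the isotopy class of $F_0(M)$. One would invoke the compactness result of Meeks, Simon and Yau~\cite{MeeksSimonYau}: a minimizing sequence of embedded surfaces isotopic to $F_0(M)$ converges (possibly after passing to a subsequence and allowing $\gamma$-reductions) to an embedded minimal surface $\Sigma$. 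The crucial point is to rule out degeneration: because the fibres are non-compact and the vertical Killing field has no zeros, any embedded surface isotopic to a section must project onto $M$ with degree one, and a collapsing component would have to be a minimal sphere, which cannot bound a ball on one side while remaining in the isotopy class of a section with the fibre geometry we have. Hence $\Sigma$ is a connected embedded minimal surface isotopic to $F_0(M)$; since sections are characterized topologically by $\pi_{|\Sigma}$ being a diffeomorphism, $\Sigma$ is an entire minimal graph.

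When $M$ is a sphere (so the fibres are compact, since otherwise Proposition~\ref{prop:existence-sections} would still give infinite-length fibres), the Meeks--Simon--Yau argument breaks down because there is no sweepout by sections to minimize within. Here the plan is to use a Calabi-type duality: the hypothesis $\int_M\frac{\tau}{\mu}=0$ lets us realize $\pi$ as $\s^2\times\R$ with Killing field $\partial_t$ (Theorem~\ref{thm:classification-sphere}(a)), and the duality of Albujer--Al\'ias~\cite{AA} (see also~\cite[Theorem~3.7]{LeeMan}) converts the equation for a minimal section into the prescribed-mean-curvature equation for a space-like graph in an associated Lorentzian warped product over $\s^2$. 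One then applies Gerhardt's existence theorem~\cite{Ger} to obtain a global space-like graph with the prescribed mean curvature, and dualizing back yields a global minimal section of $\pi$. This disposes of existence in all cases.

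For the uniqueness and minimization statements, everything is already packaged in Lemma~\ref{lemma:H=0}: once an entire minimal graph exists, any entire constant mean curvature graph is transversal to the vertical direction, hence by Lemma~\ref{lemma:H=0} it too must be minimal, and is a vertical translate of $\Sigma$; moreover $\Sigma$ minimizes area among all entire Lipschitz graphs. This proves~(1). For~(2), assume the fibres have infinite length and let $\Sigma'$ be any compact minimal (more generally, constant mean curvature) surface immersed in $\E$. One shows $\Sigma'$ must be everywhere transversal to $\xi$: if $\Sigma'$ were tangent to $\xi$ at some point, the nodal-domain structure of the angle function $\nu=\langle N,\xi\rangle$ (which, as noted before Theorem~\ref{thm:compact-stable}, lies in the kernel of the Jacobi operator) together with the fact that $\Sigma'$ is compact and the foliation by translates of $\Sigma$ consists of minimal leaves forces a contradiction via the maximum principle at a first contact point of $\Sigma'$ with a translate of $\Sigma$; hence $\Sigma'$ is transversal to $\xi$ and Lemma~\ref{lemma:H=0} identifies it with a vertical translate of $\Sigma$.

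The main obstacle I expect is the non-degeneration of the minimizing sequence in the Meeks--Simon--Yau step: one has to argue carefully, using that the ambient fibres are complete non-compact Killing orbits, that no area is lost to small minimal spheres and that the limit stays in the correct isotopy (hence homotopy, hence ``graphical'') class, rather than collapsing to something of lower dimension or splitting off a sphere. The Lorentzian-duality route for the sphere case is conceptually clean but hinges on checking that the hypotheses of Gerhardt's theorem (in particular, the relevant sign and bound conditions on the prescribed mean curvature function and the warping factor) are met by the data produced from $\tau$ and $\mu$.
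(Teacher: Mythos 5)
Your overall plan (Meeks--Simon--Yau minimization when $M$ is not a sphere, a Calabi-type duality plus Gerhardt when $M$ is a sphere, and Lemma~\ref{lemma:H=0} for uniqueness and area-minimization) is the same as the paper's, but the Meeks--Simon--Yau step has a genuine gap, and it is exactly the point you yourself flag as unresolved. You propose to minimize area in the isotopy class of a section directly inside $\E$ with fibres of infinite length; the theorem of \cite{MeeksSimonYau} is, however, a statement about compact irreducible $3$-manifolds and incompressible surfaces, and your informal argument for ruling out degeneration (collapse, loss of area to minimal spheres) is not a proof. The paper's key idea, which is missing from your proposal, is to pass first to the compact quotient $Q$ of $\E$ by a vertical translation, to observe that $Q$ is irreducible precisely because $M$ is not a sphere (its universal cover is topologically $\R^3$) and that a global section is incompressible in $Q$, to apply \cite{MeeksSimonYau} there, and only then to lift the resulting embedded minimal surface back to $\E$. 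Without this quotient and the irreducibility/incompressibility verification the appeal to \cite{MeeksSimonYau} is unjustified, so your existence argument in the non-sphere case is incomplete. A further slip: being isotopic to a section does not make the limit surface a graph (an isotopic surface need not be transverse to the fibres); graphicality is obtained in the paper from compactness of the lifted surface, the infinite length of the fibres and the maximum principle, after which Lemma~\ref{lemma:H=0} takes over.

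Two smaller points. First, your rationale for the case split is off: when $M=\s^2$ and $\int_M\frac{\tau}{\mu}=0$ the fibres need not be compact (the simply connected model of Theorem~\ref{thm:classification-sphere}(a) has fibres of infinite length); the correct reason the minimization route fails over the sphere is that the quotient $\s^2\times\s^1$ is not irreducible and a section is not incompressible there. Your sphere-case treatment via the duality and Gerhardt's theorem does match the paper's Case II, although the substantive work you defer is precisely the dualization: writing the prescribed mean curvature equation for spacelike graphs in $\s^2\times\R$ with the metric $\pi_M^*(\df s^2_M)-\mu^{-2}\pi_\R^*(\df t^2)$, and then using Lemma~\ref{lemma:JZ} and Poincar\'e's lemma to integrate the dual equation and produce the function $u$ whose Killing graph is minimal. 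Second, for item (2) your detour through transversality via nodal domains of the angle function is not available, since the compact minimal surface in question is not assumed stable; the direct argument suffices: the vertical translates of $\Sigma$ foliate $\E$ by minimal leaves, and the maximum principle at a first contact point forces any compact minimal surface to coincide with one of these leaves.
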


\begin{proof}
Since $M$ is compact and $\int_M\frac{\tau}{\mu}=0$, Proposition~\ref{prop:existence-sections} guarantees the existence of global sections, and we can assume that the fibres of $\pi$ have infinite length without losing generality. The proof is now split in two cases depending of whether or not $M$ is topologically a sphere.

\textbf{Case I.} If $M$ is not homeomorphic to $\s^2$, then let $Q$ be a quotient of $\E$ under a vertical translation, and consider $\Sigma_0\subset Q$ to be a global smooth section. On the one hand, since $M$ is not a sphere, the compact $3$-manifold $Q$ is irreducible because its universal cover is topologically $\R^3$, which is irreducible. On the other hand, the global section $\Sigma_0$ is incompressible in $Q$ because the inclusion $\iota: \Sigma_0\to P$ induces an injective morphism $\iota_*:\pi_1(\Sigma_0)\to\pi_1(Q)$ between the homotopy groups. As a consequence of the results of Meeks, Simon and Yau~\cite{MeeksSimonYau}, there exists an embedded minimal surface $\Sigma'\subset Q$ in the isotopy class of $\Sigma_0$. 

Since $\Sigma'$ is isotopic to $\Sigma_0$, it is clear that it lifts to a compact embedded minimal surface $\Sigma\subset\E$. Since the fibres of $\pi$ have infinite length, it follows that $\Sigma$ is an entire graph by an easy application of the maximum principle. The rest of properties in the statement are now a consequence of Lemma~\ref{lemma:H=0}.

\textbf{Case II.} If $M$ is homeomorphic to $\s^2$, the technique above does not apply (note that $M\times\s^1$ is not irreducible and a global section is not incompressible). Here we will develop an \emph{ad hoc} argument based on a Calabi-type duality, and on the existence of an entire space-like graph with prescribed mean curvature $\tau$ in $M\times\R$ when the Lorentzian metric $\pi_M^*(\df s^2_M)-\mu^{-2}\pi_\R^*(\df t^2)$ is considered. Such existence follows from the work of Gerhardt~\cite{Ger}.

Parametrizing the surface given by Gerhardt as $\Phi:M\mapsto M\times\R$ with $\Phi(x)=(x,v(x))$ for some $v\in\mathcal{C}^\infty(M)$, the fact that $\Phi$ has prescribed mean curvature $\tau$ with respect to the aforesaid Lorentzian metric can be expressed as
\begin{equation}\label{prop:calabi-eqn1}
\mathrm{div}\left(\frac{\nabla v}{\mu\sqrt{\mu^2-\|\nabla v\|^2}}\right)=\frac{2\tau}{\mu},
\end{equation}
and the spacelike condition reads $\|\nabla v\|<\mu$. Given a global smooth section $F_0:M\to\E$, define $d\in\mathcal C^\infty(\E)$ as the Killing distance along fibres to $F_0$. The vector field $Z=\pi_*(\overline\nabla d)$ in $M$ satisfies $\mathrm{div}(JZ)=\frac{-2\tau}{\mu}$ by Lemma~\ref{lemma:JZ}, and therefore
\begin{equation}\label{prop:calabi-eqn2}
\mathrm{div}\left(\frac{\nabla v}{\mu\sqrt{\mu^2-\|\nabla v\|^2}}+JZ\right)=0.
\end{equation}

Since $M$ is simply connected, Poincar\'{e}'s lemma guarantees the existence of $u\in\mathcal C^\infty(M)$ such that
\begin{equation}\label{prop:calabi-eqn3}
\frac{\nabla v}{\mu\sqrt{\mu^2-\|\nabla v\|^2}}+JZ=J\nabla u.
\end{equation}
Defining $Gu=\nabla u-Z$, and applying $J$ to ~\eqref{prop:calabi-eqn3}, we get 
\begin{equation}\label{prop:calabi-eqn4}
Gu=\frac{-J\nabla v}{\mu\sqrt{\mu^2-\|\nabla v\|^2}}.
\end{equation}
Taking squared norms in~\eqref{prop:calabi-eqn4}, we reach
\begin{equation}\label{prop:calabi-eqn5}
\sqrt{\mu^2-\|\nabla v\|^2}=\frac{1}{\sqrt{\mu^{-2}+\|Gu\|^2}}.
\end{equation}
Now plugging~\eqref{prop:calabi-eqn4} into~\eqref{prop:calabi-eqn5}, leads to 
\begin{equation}\label{prop:calabi-eqn6}
\frac{\mu\cdot Gu}{\sqrt{\mu^{-2}+\|Gu\|^2}}=-J\nabla v,
\end{equation}
and taking the divergence in~\eqref{prop:calabi-eqn6} it follows that the graph of $u$ with respect to $F_0$ is an entire minimal graph (note that $\mathrm{div}(J\nabla v)=0$). The remaining properties in the statement follow from Lemma~\ref{lemma:H=0} and the maximum principle.
\end{proof}

\begin{remark}\label{rmk:complete-multigraphs}
If $M$ is compact but not simply connected, Gerhardt's result still holds. Passing to the universal Riemannian covering surface of $M$, an argument similar to that of Case II in the proof of Theorem~\ref{thm:existence-entire-minimal} can be applied giving rise to a complete minimal surface in $\E$ everywhere transversal to the Killing direction (a so-called \emph{vertical multigraph}), but it may fail to be an entire graph because of a period problem.
\end{remark}

Killing submersions with the topology of the Hopf projection also admit minimal spheres. This follows from a result of Simon that $\s^3$ endowed with any Riemannian metric admits an embedded minimal sphere~\cite{Simon}. These are the only two scenarios where minimal spheres exist.

\begin{theorem}\label{thm:minimal-spheres}
Let $\pi:\E\to M$ be a Killing submersion. Then $\E$ admits a immersed minimal sphere $\Sigma$ if and only if $M$ is topologically $\s^2$.
\end{theorem}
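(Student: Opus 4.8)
The plan is to prove both implications separately, using the classification results from Section~\ref{sec:Killing-submersions} for the forward direction and the cited existence theorems (Theorem~\ref{thm:existence-entire-minimal} and Simon's theorem) for the converse.

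First I would settle the ``only if'' direction. Suppose $\Sigma$ is an immersed minimal sphere in $\E$. The key observation is that $\pi_{|\Sigma}\colon\Sigma\to M$ cannot degenerate badly: if $\Sigma$ were somewhere tangent to the vertical Killing direction $\xi$, the analysis would be delicate, so instead I would argue via the base. Compose with $\pi$ to get a map from $\s^2$ to $M$; since $\Sigma$ is compact and minimal, and $\xi$ is a Killing field, the angle function $\nu=\langle N,\xi\rangle$ (suitably normalized) is well-behaved, but the cleanest route is topological: a minimal sphere in $\E$ forces $M$ to be compact. Indeed, if $M$ were non-compact, then $M$ is diffeomorphic to a surface with non-trivial $\pi_1$ or to $\R^2$; in the latter case $\E$ is diffeomorphic to $\R^3$ (Theorem~\ref{thm:classification-disk} and Remark~\ref{rmk:classification-disk-general}, or its finite-length quotient $\R^2\times\s^1$), neither of which contains a compact minimal surface, since $\R^3$ admits no compact minimal surface and in $\R^2\times\s^1$ the distance to a point in the $\R^2$-factor composed with $\pi$ would be a subharmonic function on $\Sigma$ attaining an interior maximum, forcing $\Sigma$ to lie in a fibre-direction slab and ultimately to be non-compact or to project to a point, which is impossible for a sphere. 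More generally, applying the maximum principle to $\Sigma$ against the foliation of $\E$ by fibres shows $\pi(\Sigma)$ is all of $M$ when $M$ is compact and cannot be contained in a compact piece otherwise; hence $M$ is compact. It then remains to exclude $M$ of positive genus: if $M$ has genus $\geq 1$, pass to the universal cover as in Theorem~\ref{thm:classification-other}; the minimal sphere $\Sigma$ lifts (being simply connected) to a compact minimal sphere in a Killing submersion over $\widetilde M$, which is $\R^2$ or a cylinder over $\R^2$, contradicting the previous step. Therefore $M$ is topologically $\s^2$.

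For the ``if'' direction, suppose $M$ is topologically $\s^2$. By Theorem~\ref{thm:classification-sphere}, $\E$ is diffeomorphic either to $\s^2\times\R$ (or $\s^2\times\s^1$, the finite-length quotient) when $\int_M\frac{\tau}{\mu}=0$, or to $\s^3$ (or a lens space $L(n,1)$) when $\int_M\frac{\tau}{\mu}\neq 0$. In the first case, $\int_M\frac\tau\mu=0$, so Theorem~\ref{thm:existence-entire-minimal} already produces an entire minimal graph over $M\cong\s^2$, which is a minimal sphere. In the second case, $\E$ is diffeomorphic to $\s^3$ (after possibly passing to the double cover, since $L(n,1)=\s^3/\Z_n$ and we may lift to $\s^3$, then project the resulting minimal sphere back down — or simply note the minimal sphere in $\s^3$ descends if it is equivariant, but passing to $\s^3$ and lifting the metric is the cleaner argument); now Simon's theorem~\cite{Simon} guarantees that $\s^3$ with any Riemannian metric contains an embedded minimal sphere, which gives the desired immersed minimal sphere in $\E$. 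In the lens-space case one works directly on $L(n,1)$: Simon's min-max argument produces an embedded minimal surface, and a genus bound (or the fact that the relevant sweepout is by spheres) ensures it is a sphere; alternatively, lift the metric to $\s^3$, apply Simon's theorem there, and use that the minimal sphere of least area is unique enough to be invariant under the deck group, hence descends.

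The main obstacle I anticipate is the ``only if'' direction when $\Sigma$ is allowed to be tangent to the Killing direction at some points, which blocks a naive ``project to the base'' argument; the workaround is to avoid differential-geometric control of $\pi_{|\Sigma}$ entirely and argue purely topologically — a simply connected compact minimal surface cannot exist in a $3$-manifold whose universal cover is $\R^3$ (no compact minimal surfaces) or $\R^2\times\s^1$-type (the projection to the $\R^2$ factor would be harmonic, hence constant on the compact $\Sigma$, which is absurd). Once this topological exclusion is in place, combining it with the diffeomorphism classification of total spaces from Theorems~\ref{thm:classification-disk}, \ref{thm:classification-sphere} and~\ref{thm:classification-other} forces $M\cong\s^2$, and the converse is then a direct appeal to the already-established Theorem~\ref{thm:existence-entire-minimal} and Simon's theorem.
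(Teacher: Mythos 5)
The ``if'' direction of your proposal follows the paper's route (Theorem~\ref{thm:existence-entire-minimal} when $\int_M\tfrac{\tau}{\mu}=0$, Simon's theorem in the Hopf-type case), except that your worry about descending from $\s^3$ to a lens space is unnecessary: the statement only asks for an \emph{immersed} minimal sphere, so you may take the embedded minimal sphere produced by Simon in $\widetilde\E\cong\s^3$ and compose with the covering projection $\sigma:\widetilde\E\to\E$, which is a local isometry; no equivariance of the sphere, and no genus control for a min-max on $L(n,1)$, is needed.

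The ``only if'' direction, however, contains a genuine gap. Your central claim --- that the exclusion can be made ``purely topologically'' because a Riemannian $3$-manifold diffeomorphic to $\R^3$ (or to $\R^2\times\s^1$) admits no compact minimal surface --- is false: the diffeomorphism type does not constrain minimal surfaces (for instance, a rotationally symmetric metric $\df r^2+\phi(r)^2 g_{\s^2}$ on $\R^3$ with $\phi'(r_0)=0$ contains the compact minimal sphere $\{r=r_0\}$). Your fallback argument for the $\R^2\times\s^1$-type case is also unjustified: the restriction to $\Sigma$ of $\pi$ composed with a coordinate or distance function of the base is harmonic or subharmonic only under extra hypotheses --- a Riemannian submersion is a harmonic map precisely when its fibres are minimal, and here the fibres are geodesics only where $\overline\nabla\mu=0$ (see~\eqref{eqn:levi-civita}); moreover convexity of the base distance function would require curvature assumptions on $M$, which is arbitrary. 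What actually excludes a non-compact base is metric, not topological: lift $\Sigma$ (simply connected) to $\widetilde\pi:\widetilde\E\to\widetilde M$ as in Theorem~\ref{thm:classification-other}; if $\widetilde M$ is non-compact then the fibres of $\widetilde\pi$ have infinite length (Remark~\ref{rmk:classification-disk-general}), the vertical translations are isometries, and comparing the compact minimal surface $\widetilde\Sigma$ with its vertical translates at a last contact point, the maximum principle forces $\widetilde\Sigma$ to be an entire graph, so $\widetilde\pi|_{\widetilde\Sigma}$ is a homeomorphism onto $\widetilde M$ and $\widetilde M$ must be compact, hence $\widetilde M\cong\s^2$; orientability of $M$ then gives $M\cong\s^2$. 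This Killing-field maximum-principle step (which is exactly what the paper uses, together with the fact that finite fibre length over a simply connected total space only occurs in the Hopf case of Theorem~\ref{thm:classification-sphere}) is indispensable, and your proposal does not supply a substitute for it.
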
 

\begin{proof}
Since spheres are simply connected, there exists an immersed minimal sphere $\Sigma\subset\E$ if and only if there exists an immersed minimal sphere $\widetilde\Sigma\subset\widetilde\E$, being $\widetilde\pi:\widetilde\E\to\widetilde M$ the universal Riemannian covering given by Theorem~\ref{thm:classification-other}. If the fibres of $\widetilde\pi$ have infinite length, then any immersed minimal sphere $\widetilde\Sigma\subset\widetilde\E$ must be an entire graph by the maximum principle. Thus either $\widetilde\Sigma$ is an entire graph or the fibres of $\pi$ have finite length. 

In the first case, $\widetilde\pi$ induces an homeomorphism from $\widetilde\Sigma$ to $\widetilde M$, so $\widetilde M$ is topologically $\s^2$ and the submersion is trivial. As $\widetilde\Sigma$ is an entire minimal graph and $\widetilde\E$ is homeomorphic to $\s^2\times\R$, we conclude that $\widetilde\Sigma$ corresponds to one of the examples in Theorem~\ref{thm:existence-entire-minimal}. In the second case, $\widetilde\pi$ is the Hopf fibration by Theorem~\ref{thm:classification-sphere} and the existence is a consequence of Simon's result~\cite{Simon}. Hence both cases lead to $\widetilde M$ being topologically $\s^2$. Theorem~\ref{thm:classification-other} implies that $\E$ is a quotient of $\widetilde\E$ under a group $G$ of Killing isometries, and the only orientable quotient of a sphere is the sphere itself, so we deduce that $G$ projects to the trivial group, i.e., $G$ only consists of vertical translations. Therefore $\pi:\E\to M$ is again a Killing submersion over $\s^2$, and the statement follows (see also Remark~\ref{rmk:classification-sphere-general}).
\end{proof}

\section{Compact stable surfaces with constant mean curvature}\label{sec:stability}

We begin by introducing the stability operator $L=\Delta+|A|^2+\mathrm{Ric}(N)$ of a constant mean curvature surface immersed in the total space of a Killing submersion, where $A$ is the shape operator of the immersion and $N$ is a unit normal vector field. Next lemma gives another expression for $L$ in terms of the scalar curvature.

\begin{lemma}\label{lemma:L}
Let $\pi:\E\to M$ be a Killing submersion with bundle curvature $\tau$ and Killing length $\mu$. The stability operator of a constant mean curvature $H$ surface $\Sigma$ immersed in $\E$ reads
\[L=\Delta-K+4H^2+S-\det(A),\]
where $K$ denotes the Gaussian curvature, respectively, and the scalar curvature $S$ of $\E$ (as a function on $M$) satisfies
\[\frac{1}{2}S=K_M-\tau^2-\frac{\Delta\mu}{\mu},\] 
being $K_M$ the Gaussian curvature of $M$ and $\Delta$ its Laplace operator.
\end{lemma}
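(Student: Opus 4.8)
The plan is to compute the stability operator $L = \Delta + |A|^2 + \mathrm{Ric}(N)$ in two separate pieces: the intrinsic algebraic rewriting of $|A|^2 + \mathrm{Ric}(N)$ using the Gauss equation, and the computation of the scalar curvature $S$ of $\E$ in terms of $\tau$, $\mu$ and $K_M$ via an explicit orthonormal frame. For the first piece, I would start from the Gauss equation for the surface $\Sigma \subset \E$: if $K$ denotes the Gaussian (intrinsic) curvature of $\Sigma$, then $K = K_{\E}(T\Sigma) + \det(A)$, where $K_{\E}(T\Sigma)$ is the sectional curvature of $\E$ on the tangent plane. Since $\mathrm{Ric}(N)$ is the Ricci curvature in the normal direction, one has $\mathrm{Ric}(N) = S/2 - K_{\E}(T\Sigma)$ (a standard identity valid in dimension $3$: the Ricci curvature in a direction equals half the scalar curvature minus the sectional curvature of the orthogonal plane). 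Combining these, $K_{\E}(T\Sigma) = K - \det(A)$, hence $\mathrm{Ric}(N) = S/2 - K + \det(A)$. For $|A|^2$, write it in terms of the mean curvature and determinant: since $\Sigma$ has constant mean curvature $H$, the trace of $A$ is $2H$ (with the convention $H = \tfrac12\,\mathrm{tr}\,A$), so $|A|^2 = (\mathrm{tr}\,A)^2 - 2\det(A) = 4H^2 - 2\det(A)$. Adding, $|A|^2 + \mathrm{Ric}(N) = 4H^2 - 2\det(A) + S/2 - K + \det(A) = -K + 4H^2 + S/2 - \det(A)$. (A sign/convention audit on $S$ versus $S/2$ will be needed here; the factor in the statement suggests the intended convention is that the term is literally $S$ with $S$ defined by $\tfrac12 S = K_M - \tau^2 - \Delta\mu/\mu$, so I would fix conventions so that the coefficient matches $L = \Delta - K + 4H^2 + S - \det(A)$ as written.)

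For the second piece, the scalar curvature of $\E$, I would use the explicit local model from the proof of Theorem~\ref{thm:classification-disk}: working over a conformal chart $(\Omega, \mathrm{d}s^2_\lambda)$ with the orthonormal frame $\{E_1, E_2, E_3\}$ of~\eqref{eqn:frame} and the Lie bracket relations~\eqref{eqn:bracket}. From the structure equations $[E_1,E_2] = \tfrac{\lambda_y}{\lambda^2}E_1 - \tfrac{\lambda_x}{\lambda^2}E_2 + \tfrac{2\tau}{\mu}E_3$ (using~\eqref{eqn:tau-model}), $[E_1,E_3] = -\tfrac{\mu_x}{\lambda\mu}E_3$, $[E_2,E_3] = -\tfrac{\mu_y}{\lambda\mu}E_3$, I would compute the Levi-Civita connection coefficients via Koszul's formula, then the three sectional curvatures $K_{13} = K(E_1 \wedge E_3)$, $K_{23} = K(E_2 \wedge E_3)$ and $K_{12} = K(E_1 \wedge E_2)$, and finally $S = 2(K_{12} + K_{13} + K_{23})$. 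The vertical sectional curvatures $K_{13}, K_{23}$ produce the $-\Delta\mu/\mu$ term (their sum is essentially $-\tfrac{1}{\mu}(\tfrac{1}{\lambda^2}(\mu_{xx}+\mu_{yy})) = -\Delta\mu/\mu$, up to lower-order terms that cancel), while $K_{12}$ gives $K_M - 3\tau^2$ — this is the familiar $\E(\kappa,\tau)$-type formula $K_{12} = K_M - 3\tau^2$ generalized to variable $\tau$ and $\mu$; one must check that the extra terms coming from the non-constancy of $\tau$ and $\mu$ assemble correctly. Summing, $\tfrac12 S = K_{12} + K_{13} + K_{23} = (K_M - 3\tau^2) + (2\tau^2 - \Delta\mu/\mu) = K_M - \tau^2 - \Delta\mu/\mu$, which is the claimed identity. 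Since both sides are functions on $M$ and $\E$ is connected, the local computation suffices.

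The main obstacle I expect is the bookkeeping in the scalar-curvature computation: the frame $\{E_1,E_2,E_3\}$ is not coordinate-induced, so the connection coefficients involve all of $\lambda_x, \lambda_y, \mu_x, \mu_y$ and the bundle-curvature terms, and several partial cancellations must occur for the $\tau$-dependent pieces to collapse to $-\tau^2$ and for the $\mu$-dependent pieces to form exactly $-\Delta\mu/\mu$. I would organize this by computing $\overline\nabla_{E_i}E_j$ for all $i,j$ first, isolating the contribution of $\tau$ (setting $\mu$ constant to recover the known $\E(\kappa,\tau)$ formula as a check) and then the contribution of $\mu$ (setting $\tau = 0$, where $\E$ is the warped product $M\times_\mu\R$ of Example~\ref{ex:warped-product} and the scalar curvature is classically known). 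The cross terms between varying $\tau$ and varying $\mu$ should be checked to vanish or recombine. The first piece (the Gauss-equation rewriting) is routine once the curvature conventions are pinned down.
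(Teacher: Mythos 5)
Your proposal is correct, and for the substantive part it is the same computation the paper performs: the paper also works in the local model, uses the frame \eqref{eqn:frame}, the brackets \eqref{eqn:bracket} and Koszul's formula to get the connection \eqref{eqn:levi-civita}, then records exactly the three sectional curvatures you predict, namely $R(E_1,E_2,E_2,E_1)=K_M-3\tau^2$ and the two vertical ones summing to $2\tau^2-\frac{\Delta\mu}{\mu}$ (the $\lambda$-derivative cross terms cancel just as you anticipate), and doubles the sum to obtain $\tfrac12 S=K_M-\tau^2-\frac{\Delta\mu}{\mu}$. The only difference is in the first half: the paper does not prove the algebraic form of $L$ at all, it simply cites \cite{MeeksPerezRos}, whereas you derive it from the Gauss equation together with $\mathrm{Ric}(N)=\tfrac{S}{2}-K_{\E}(T\Sigma)$ and $|A|^2=4H^2-2\det A$; this makes your argument more self-contained, and it also resolves the convention question you flagged, but not in the direction you suggest: with $S$ the genuine scalar curvature --- which is what the paper's second display means, since its proof obtains $S$ as twice the sum of the sectional curvatures, and what Corollary~\ref{coro:complete-stable} uses via the condition $3H^2+\tfrac12 S\geq c>0$ --- the correct coefficient is $\tfrac12 S$, i.e.\ $L=\Delta-K+4H^2+\tfrac12 S-\det A$. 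So rather than ``fixing conventions so that the coefficient matches $S$ as written,'' you should state plainly that your computation shows the displayed formula for $L$ carries a typo ($S$ in place of $\tfrac{S}{2}$), which is harmless for the rest of the paper because all later uses go through the identity $\tfrac12 S=K_M-\tau^2-\frac{\Delta\mu}{\mu}$.
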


\begin{proof}
The computation is local so we shall consider the model metric on $\E$ given by~\eqref{eqn:metric} and the metric in the base $\lambda^2(\df x^2+\df y^2)$. Using the expression of the Lie bracket~\eqref{eqn:bracket} and Koszul formula, we derive the Levi-Civita connection in $\E$ applied to the frame given by~\eqref{eqn:frame}
\begin{equation}\label{eqn:levi-civita}
\begin{aligned}
\overline\nabla_{E_1}E_1&=-\frac{\lambda_y}{\lambda^2}E_2,&
\overline\nabla_{E_1}E_2&=\frac{\lambda_y}{\lambda^2}E_1+\tau E_3,&
\overline\nabla_{E_1}E_3&=-\tau E_2,\\
\overline\nabla_{E_2}E_1&=\frac{\lambda_x}{\lambda^2}E_2-\tau E_3,&
\overline\nabla_{E_2}E_2&=-\frac{\lambda_x}{\lambda^2}E_1,&
\overline\nabla_{E_2}E_3&=\tau E_1,\\
\overline\nabla_{E_3}E_1&=-\tau E_2+\frac{\mu_x}{\lambda\mu}E_3,&
\overline\nabla_{E_3}E_2&=\tau E_1+\frac{\mu_y}{\lambda\mu}E_3,&
\overline\nabla_{E_3}E_3&=-\frac{1}{\mu}\overline\nabla\mu.
\end{aligned}
\end{equation}
From~\eqref{eqn:levi-civita} we get the following sectional curvatures of the basic planes spanned by $\{E_1,E_2,E_3\}$ ($R$ denotes the Riemannian curvature tensor):
\begin{equation}\label{lemma:L:eqn1}
\begin{aligned}
R(E_1,E_2,E_2,E_1)&=K_M-3\tau^2,\\
R(E_1,E_3,E_3,E_1)&=\tau^2-\tfrac{1}{\mu}E_1(E_1(\mu))-\tfrac{1}{\lambda\mu}E_2(\lambda)E_2(\mu),\\
R(E_2,E_3,E_3,E_2)&=\tau^2-\tfrac{1}{\mu}E_2(E_2(\mu))-\tfrac{1}{\lambda\mu}E_1(\lambda)E_1(\mu).
\end{aligned}
\end{equation}
The scalar curvature $S$ is twice the result of adding up the three quantities in~\eqref{lemma:L:eqn1}, which leads to the expression in the statement. Finally the given expression for $L$ holds in any $3$-manifold (see~\cite[Section~10]{MeeksPerezRos}).
\end{proof}

Note that the \emph{angle function} of the immersion, defined as $\nu=\langle N,\xi\rangle$, where $\xi$ is the vertical Killing vector field, lies in the kernel of the Jacobi operator~\cite[Proposition 2.12]{BCE}. Assuming that $\nu$ is identically zero or never vanishes gives rise to two distinguished families of surfaces in $\E$:
\begin{itemize}
 \item If $\nu\equiv 0$, then $\Sigma$ is everywhere vertical, so there exists a curve $\Gamma\subset M$ such that $\Sigma=\pi^{-1}(\Gamma)$ and $\Sigma$ is called the \emph{vertical cylinder} over $\Gamma$.

 Parametrizing $\Gamma$ as $\gamma:[a,b]\to M$ with unit-speed, and taking a horizontal lift $T$ of $\gamma'$ along $\Sigma$, then $\{T,E_3=\frac{1}{\mu}\xi\}$ is a orthonormal frame in $\Sigma$. Using \eqref{eqn:levi-civita} we compute the second fundamental form in this frame as
 \[\sigma\equiv\left(\begin{matrix}
  \langle\overline\nabla_TT,N\rangle&\langle\overline\nabla_TE_3,N\rangle\\
  \langle\overline\nabla_{E_3}T,N\rangle&\langle\overline\nabla_{E_3}E_3,N\rangle
 \end{matrix} \right)=\left(\begin{matrix}\kappa_g&\tau\\ \tau&-\eta(\log(\mu))\end{matrix}\right)\]
 where $\kappa_g$ is the geodesic curvature of $\gamma$ in $M$ and $\eta=\pi_*N$. Hence $\Sigma$ has mean curvature $H$ if and only if $\kappa_g=2H+\eta(\log(\mu))$. Using the classical theory of \textsc{ODE}s, if $M$ is complete, then for any point $x\in M$ and any direction $v\in T_xM$ there exists a unique constant speed curve $\gamma:\R\to M$ with $\gamma(0)=x$ and $\gamma'(0)=v$, and such that $\pi^{-1}(\Gamma)$ has constant mean curvature $H$.

 Using the parametrization $(t,s)\mapsto\phi_t(\widetilde\gamma(s))$, where $\{\phi_t\}$ is the group of vertical translations and $\widetilde\gamma$ is a horizontal lift of $\gamma$, the metric in $\Sigma$ is expressed as $\mu(\gamma(s))^2\df t^2+\df s^2$, which is not flat in general as it is in the unitary case (see~\cite[Proposition 2.12]{EO})

 \item If $\nu$ has no zeroes, then $\Sigma$ is everywhere transversal to the Killing vector field, and it is called a \emph{vertical multigraph}. Note that $\Sigma$ is a graph if and only if $\pi_{|\Sigma}:\Sigma\to M$ is injective.

 A vertical multigraph with constant mean curvature is always stable since $\nu$ is a positive function such that $L\nu=0$.
\end{itemize}

\begin{theorem}\label{thm:compact-stable}
Let $\pi:\E\to M$ be a Killing submersion with bundle curvature $\tau$ and Killing length $\mu$, and let $\Sigma$ be a compact orientable stable surface with constant mean curvature immersed in $\E$. Then one of the following assertions holds:
\begin{enumerate}[label=(\roman*)]
 \item $\Sigma$ is an entire minimal graph, $M$ is compact and $\int_M\frac{\tau}{\mu}=0$.
 \item $\Sigma$ is a vertical cylinder over a closed curve in $M$ and the fibres of $\pi$ are compact. Hence $\Sigma$ is topologically a torus.
\end{enumerate}
In particular, if both $M$ and the fibres of $\pi$ are not compact, then $\E$ does not admit compact orientable stable surfaces with constant mean curvature.
\end{theorem}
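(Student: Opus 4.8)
The plan is to play off two functions lying in the kernel of the stability operator $L=\Delta+|A|^2+\mathrm{Ric}(N)$ of $\Sigma$. The first is the angle function $\nu=\langle N,\xi\rangle$, which satisfies $L\nu=0$ by~\cite[Proposition~2.12]{BCE}; the second is a positive function $u\in\mathcal C^\infty(\Sigma)$ with $Lu=0$, whose existence is precisely the content of Fischer-Colbrie's stability criterion~\cite{FC}. Writing $L=\Delta+q$ with $q=|A|^2+\mathrm{Ric}(N)$ and integrating by parts on the closed surface $\Sigma$ (using $\Delta u=-qu$), one obtains for every $f\in\mathcal C^\infty(\Sigma)$ the identity
\[-\int_\Sigma fLf=\int_\Sigma\bigl(|\nabla f|^2-qf^2\bigr)=\int_\Sigma\Bigl|\nabla f-\tfrac{f}{u}\nabla u\Bigr|^2\geq 0,\]
with equality if and only if $f/u$ is constant. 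Applying this with $f=\nu$ and using $L\nu=0$ forces the middle integral to vanish, hence $\nu=cu$ for some $c\in\R$. In particular $\nu$ is either identically zero or nowhere zero, and these are exactly the two alternatives in the statement.

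In the case $c\neq0$ the function $\nu$ has constant sign, so $\Sigma$ is everywhere transversal to the vertical direction; Lemma~\ref{lemma:H=0} then applies verbatim and yields that $M$ is compact, $\int_M\frac{\tau}{\mu}=0$, and $\Sigma$ is an entire minimal graph, which is assertion~(i). In the case $c=0$ we have $\nu\equiv0$, so, as recalled just before the statement, $\Sigma$ is the vertical cylinder $\pi^{-1}(\Gamma)$ over a curve $\Gamma\subset M$. Since $\Sigma$ is a compact surface without boundary, $\Gamma=\pi(\Sigma)$ is a compact connected $1$-manifold without boundary, hence a circle, and each fibre of $\pi$, being a closed subset of the compact manifold $\Sigma$, is compact; thus the integral curves of $\xi$ are circles and $\Sigma$ is a circle bundle over $\s^1$. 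As $\Sigma$ is orientable, it must be a torus, giving assertion~(ii). For the final sentence, observe that~(i) requires $M$ compact and~(ii) requires the fibres of $\pi$ compact, so $\E$ admits no compact orientable stable constant mean curvature surface when neither $M$ nor the fibres are compact.

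The only step that requires real care is the rigidity encoded in the displayed identity: one must justify the integration by parts on the closed surface and, above all, pin down the equality case, which forces $\nu$ to be a scalar multiple of $u$; this is the Fischer-Colbrie--Schoen argument in the compact setting. Everything else reduces to a direct appeal to Lemma~\ref{lemma:H=0}, to the description of vertical cylinders given before the statement, and to the elementary classification of circle bundles over $\s^1$.
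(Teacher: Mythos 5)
Your proof is correct and follows essentially the same route as the paper: stability forces the angle function $\nu$, which is a Jacobi function, to be either identically zero or nowhere vanishing, and the two alternatives are then settled exactly as you do, by Lemma~\ref{lemma:H=0} in the transversal case and by the vertical-cylinder description (compact fibres, hence a torus) when $\nu\equiv 0$. The only point worth a remark is your appeal to Fischer--Colbrie for a positive $u$ with $Lu=0$: on a \emph{compact} $\Sigma$ such a $u$ exists only when $\lambda_1(L)=0$, but in the strictly stable case $\lambda_1(L)>0$ the kernel of $L$ is trivial, so $\nu\equiv 0$ automatically and your dichotomy (which the paper simply asserts) is unaffected.
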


\begin{proof}
Stability implies that the angle function $\nu$, which is bounded on $\Sigma$ and lies in the kernel of the stability operator, is identically zero or it never vanishes. If $\nu\equiv 0$, then $\Sigma=\pi^{-1}(\Gamma)$ for a certain curve $\Gamma\subseteq M$. Since $\Sigma$ is compact, the fibres of $\pi$ must have finite length and we get to (ii). If $\nu$ never vanishes, then (i) follows from Lemma~\ref{lemma:H=0}.
\end{proof}

On the one hand, if $M$ is compact and $\int_M\frac{\tau}{\mu}=0$, then there are always surfaces satisfying item (i) by Theorem~\ref{thm:existence-entire-minimal}. On the other hand, compact vertical cylinders are topologically tori, but a characterization of their stability in terms of the corresponding curve in the base seems to be a tough task. It is important to mention that they may exist; for instance, if $\Gamma$ is a curve with constant geodesic curvature $\kappa_g\leq 1$ in the hyperbolic plane $\h^2$, then $\Gamma\times\s^1$ is a stable torus with constant mean curvature $H=\frac{1}{2}\kappa_g$ in the Riemannian product space $\h^2\times\s^1$.

\begin{corollary}\label{coro:stable-spheres}
Let $\pi:\E\to M$ be a Killing submersion and let $\Sigma$ be a compact stable surface with constant mean curvature immersed in $\E$. If $\Sigma$ is not topologically a torus, then it is an entire minimal graph.
\end{corollary}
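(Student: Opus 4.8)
The plan is to invoke Theorem~\ref{thm:compact-stable} directly, since it already contains the complete case analysis for a compact orientable stable surface with constant mean curvature immersed in $\E$. Given the hypotheses on $\Sigma$, that theorem asserts that exactly one of two situations occurs: either $\Sigma$ is an entire minimal graph (and moreover $M$ is compact with $\int_M\frac{\tau}{\mu}=0$), or $\Sigma$ is a vertical cylinder $\pi^{-1}(\Gamma)$ over a closed curve $\Gamma\subset M$ whose fibres are compact, in which case $\Sigma$ is diffeomorphic to $\s^1\times\s^1$, that is, a torus.

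First I would observe that the hypothesis that $\Sigma$ is not topologically a torus immediately discards the second alternative: using the parametrization $(t,s)\mapsto\phi_t(\widetilde\gamma(s))$ described just before Theorem~\ref{thm:compact-stable}, where $\widetilde\gamma$ is a horizontal lift of a closed unit-speed parametrization of $\Gamma$ and $\{\phi_t\}$ are the vertical translations, a vertical cylinder over a closed curve with compact fibres is the quotient of $\R^2$ by two commuting translations, hence a torus (this identification is already built into the statement of Theorem~\ref{thm:compact-stable}(ii)). Therefore only the first alternative can hold, and $\Sigma$ is an entire minimal graph, which is the claim. There is no genuine obstacle here: the corollary is a purely formal consequence of the trichotomy in Theorem~\ref{thm:compact-stable}, and the only thing to keep track of is the elementary topological fact that a compact vertical cylinder is a torus.
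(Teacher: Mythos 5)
Your proposal is correct and follows exactly the paper's (implicit) argument: the corollary is stated as an immediate consequence of the dichotomy in Theorem~\ref{thm:compact-stable}, since alternative (ii) already asserts that a compact vertical cylinder is topologically a torus, so excluding the torus leaves only the entire minimal graph case. The only point worth noting is that stability in this paper is defined for two-sided (equivalently, orientable) surfaces, so the orientability hypothesis of Theorem~\ref{thm:compact-stable} is implicitly available, as you assumed.
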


Theorem~\ref{thm:compact-stable} also allows us to give an alternative proof to the classification of compact stable minimal surfaces in homogeneous $3$-manifolds given by Meeks and Pérez~\cite[Theorem~4.18]{MeeksPerez}.

\begin{corollary}
Let $X$ be a simply connected homogeneous Riemannian $3$-manifold. If there exists a compact orientable stable surface $\Sigma$ with constant mean curvature immersed in $X$, then $X$ is isometric to $\s^2\times\R$, being $\Sigma$ a horizontal slice $\s^2\times\{t_0\}$.
\end{corollary}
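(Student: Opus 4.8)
The plan is to equip $X$ with a Killing submersion and then invoke Theorem~\ref{thm:compact-stable}, discussing the three possible topologies of $X$ separately. Every simply connected homogeneous Riemannian $3$-manifold is homeomorphic to $\R^3$, to $\s^3$, or to $\s^2\times\R$ (see~\cite{MeeksPerez} and the paragraph following Example~\ref{ex:homogeneous-S3}), and each carries a Killing submersion $\pi\colon X\to M$: if $X$ is homeomorphic to $\R^3$, Example~\ref{ex:homogeneous-R3} provides one with $M$ homeomorphic to $\R^2$ and with fibres the integral curves of a right-invariant Killing field, which are complete lines of infinite length; if $X$ is homeomorphic to $\s^3$, Example~\ref{ex:homogeneous-S3} provides one with $M$ homeomorphic to $\s^2$ and compact fibres; and if $X$ is homeomorphic to $\s^2\times\R$, then $X$ is isometric to a Riemannian product $\s^2(\kappa)\times\R$ with $\kappa>0$ (no Lie group structure is available here), which is a Killing submersion over the compact base $\s^2(\kappa)$ with fibres $\R$ of infinite length and bundle curvature $\tau\equiv 0$.

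First I would dispose of the case $X$ homeomorphic to $\R^3$: for the submersion of Example~\ref{ex:homogeneous-R3}, both $M$ and the fibres of $\pi$ are non-compact, so the last assertion of Theorem~\ref{thm:compact-stable} implies that $X$ admits no compact orientable stable constant mean curvature surface, contradicting the hypothesis. Next suppose $X$ is isometric to $\s^2(\kappa)\times\R$. The base of the associated Killing submersion is compact but the fibres are not, so alternative (ii) in Theorem~\ref{thm:compact-stable} is impossible and $\Sigma$ must be an entire minimal graph. Since $\tau\equiv 0$, the slices $\s^2(\kappa)\times\{t_0\}$ are themselves entire minimal (indeed totally geodesic) graphs, while Theorem~\ref{thm:existence-entire-minimal}(1)---equivalently Lemma~\ref{lemma:H=0}---shows that the entire minimal graph is unique up to a vertical translation; hence $\Sigma=\s^2(\kappa)\times\{t_0\}$, which is the desired conclusion.

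It remains to derive a contradiction when $X$ is homeomorphic to $\s^3$. Take the Killing submersion $\pi\colon X\to M$ of Example~\ref{ex:homogeneous-S3}, so that $M$ is homeomorphic to $\s^2$ and the fibres are compact. As $X$ is not homeomorphic to $\s^2\times\R$, Theorem~\ref{thm:classification-sphere} forces $\int_M\tfrac{\tau}{\mu}\neq 0$; hence alternative (i) of Theorem~\ref{thm:compact-stable} cannot occur, and $\Sigma$ must be a vertical cylinder $\pi^{-1}(\Gamma)$ over a closed curve $\Gamma\subset M$, a surface topologically a torus. I would then test the stability inequality with the constant function $f\equiv 1$ (legitimate because $\Sigma$ is compact without boundary), which forces $\int_\Sigma\bigl(|A|^2+\mathrm{Ric}(N)\bigr)\leq 0$. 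Using Lemma~\ref{lemma:L} and the expression for the second fundamental form of a vertical cylinder recorded in Section~\ref{sec:stability}, one obtains, for the $\E(\kappa,\tau)$-models (the round sphere and the Berger spheres, where $\mu$ and $\tau$ are constant and $M=\s^2(\kappa)$), the pointwise identity $|A|^2+\mathrm{Ric}(N)=4H^2+\kappa$, which is strictly positive and contradicts stability. For the remaining left-invariant metrics on $\s^3=\mathrm{SU}(2)$---those with a three-dimensional isometry group, where $\mu$ is no longer constant---one argues instead that $\int_\Sigma\bigl(|A|^2+\mathrm{Ric}(N)\bigr)>0$: after integrating over the torus $\Sigma$, applying Gauss--Bonnet ($\int_\Sigma K_\Sigma=0$) and discarding the exact terms by the divergence theorem, the curvature quantities of Lemma~\ref{lemma:L} together with the off-diagonal term $\tau$ of the shape operator leave a strictly positive remainder. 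Either way stability fails, so the case $X$ homeomorphic to $\s^3$ cannot happen, and the corollary follows.

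The step I expect to be the main obstacle is precisely this last positivity claim for an arbitrary homogeneous metric on $\s^3$ whose isometry group is three-dimensional: there $\mu$ is non-constant and the induced metric on $M\cong\s^2$ need not have positive curvature, so a pointwise estimate as in the $\E(\kappa,\tau)$ case is not available, and one must track, after integration over $\Sigma$, the contributions of $\mathrm{Hess}\,\mu$, of the geodesic curvature of $\Gamma$ and of the bundle curvature $\tau$, using Gauss--Bonnet on the torus and the divergence identity for $\tau$ from Lemma~\ref{lemma:JZ} to check that the possibly negative terms integrate to zero while a positive remainder survives. For the $\E(\kappa,\tau)$-models, which already cover Meeks--P\'{e}rez's classification in the homogeneous setting with four-dimensional isometry group, the pointwise identity $|A|^2+\mathrm{Ric}(N)=4H^2+\kappa$ makes the argument immediate.
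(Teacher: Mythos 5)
Your treatment of the cases $X\cong\R^3$ (both base and fibres of the submersion in Example~\ref{ex:homogeneous-R3} non-compact, so Theorem~\ref{thm:compact-stable} excludes $\Sigma$) and $X\cong\s^2\times\R$ (only alternative (i) survives, and uniqueness of the entire minimal graph up to vertical translations via Lemma~\ref{lemma:H=0} forces $\Sigma=\s^2\times\{t_0\}$) is correct and essentially the paper's argument. The gap is in the case $X\cong\s^3$. You correctly reduce to $\Sigma$ being a \textsc{cmc} Hopf torus $\pi^{-1}(\Gamma)$, and your pointwise identity $|A|^2+\mathrm{Ric}(N)=4H^2+\kappa$ does settle the $\E(\kappa,\tau)$-metrics; but for a general left-invariant metric on $\mathrm{SU}(2)$ the claim $\int_\Sigma\bigl(|A|^2+\mathrm{Ric}(N)\bigr)>0$ is asserted, not proved, and the sketch does not obviously go through. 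Integrating over the vertical torus reduces everything to a line integral over $\Gamma$ weighted by $\mu$, so ``exact'' terms on $M$ (the $\Delta\mu$ and Hessian contributions from Lemma~\ref{lemma:L}, and the divergence identity of Lemma~\ref{lemma:JZ}) do not integrate to zero; moreover $K_M$ need not be positive for the induced metric on the base, the geodesic curvature term $\kappa_g=2H+\eta(\log\mu)$ is not controlled by $H$ alone, and Gauss--Bonnet on the torus only removes $K_\Sigma$. So the step you yourself flag as the main obstacle is a genuine missing ingredient, not a routine verification.

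The paper avoids this entirely by exploiting the freedom in the choice of the Killing field rather than fixing one submersion: since the right-invariant Killing fields on $\mathrm{SU}(2)$ span every tangent space, a compact surface $\Sigma$ cannot be everywhere tangent to all of them, so there is a Hopf-like Killing submersion for which $\Sigma$ is \emph{not} a vertical cylinder. Stability then forces the angle function to be nowhere zero, so by Theorem~\ref{thm:compact-stable} (or Lemma~\ref{lemma:H=0}) $\Sigma$ would be an entire graph; but the Hopf fibration admits no global section because $\int_M\frac{\tau}{\mu}\neq 0$ (Proposition~\ref{prop:existence-sections}), a contradiction. If you want to salvage your route, you would need an honest proof of the positivity of $\int_\Sigma\bigl(|A|^2+\mathrm{Ric}(N)\bigr)$ for \textsc{cmc} Hopf tori in every left-invariant metric on $\s^3$; switching to the paper's choice-of-Killing-field argument is both shorter and avoids any curvature estimate.
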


\begin{proof}
If $X$ is homeomorphic to $\R^3$, then Example~\ref{ex:homogeneous-R3} shows that $X$ is the total space of a Killing submersion with non-compact base or fibres. If $X$ is homeomorphic to $\s^3$, then there exists a Hopf-like Killing submersion $\pi:X\to\s^2$ for which $\Sigma$ is not vertical by Example~\ref{ex:homogeneous-R3}, so $\Sigma$ should be an entire graph for $\pi$ but the Hopf fibration does not admit entire sections. We conclude that $X$ isometric $\s^2\times\R$ (see~\cite[Theorem 2.4]{MeeksPerez}), and $\Sigma$ is an entire minimal graph by Theorem~\ref{thm:compact-stable}, unique up to vertical translations, so $\Sigma$ must be a slice $\s^2\times\{t_0\}$.
\end{proof}

Finally we will give a last characterization of our entire minimal graphs dropping the assumption of compactness. We will apply a result of Meeks, Pérez and Ros~\cite[Theorem~2.13]{MeeksPerezRos} based on a previous result of Rosenberg~\cite{Rosenberg}.

\begin{corollary}\label{coro:complete-stable}
Let $\pi:\E\to M$ be a Killing submersion and suppose that $\Sigma$ is a complete stable surface with constant mean curvature $H$ immersed in $\E$.
\begin{enumerate}
 \item If $3H^2>\sup_{\pi(\Sigma)}\{\tau^2-K_M+\frac{1}{\mu}\Delta u\}$, then $\pi$ is topologically the projection $\s^2\times\R\to\s^2$ and $\Sigma$ is an entire minimal graph.

 \item If $3H^2=\sup_{\pi(\Sigma)}\{\tau^2-K_M+\frac{1}{\mu}\Delta u\}$ and $\mu$ is bounded, then either $\Sigma$ is a vertical multigraph or a vertical cylinder.
\end{enumerate}
\end{corollary}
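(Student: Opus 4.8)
The plan is to apply the stability criterion of Meeks, P\'erez and Ros~\cite[Theorem~2.13]{MeeksPerezRos}, which asserts that a complete stable surface immersed in a Riemannian $3$-manifold whose Ricci curvature in the normal direction is bounded below by a positive constant cannot exist (more precisely, such a surface forces strong restrictions: it must be compact, and in fact the ambient manifold is forced to be compact with finite fundamental group). So the first step is to compute the quantity $|A|^2+\mathrm{Ric}(N)$ appearing in the stability operator in terms of the data $\tau$, $\mu$, $H$ and $K_M$, which is exactly what Lemma~\ref{lemma:L} provides: using $L=\Delta-K+4H^2+S-\det(A)$ together with $\tfrac12 S=K_M-\tau^2-\tfrac{\Delta\mu}{\mu}$ and the Gauss equation $K=K_\Sigma+\det(A)$ (so that $-K-\det A$ combines appropriately), one rewrites the zero-order term of $L$ as $|A|^2+\mathrm{Ric}(N)=2\bigl(2H^2+\tau^2-K_M+\tfrac{1}{\mu}\Delta\mu\bigr)+\bigl(|A|^2-4H^2+\ldots\bigr)$, and since $|A|^2\ge 2H^2$ always, the relevant lower bound for the potential is controlled by $3H^2-\bigl(\tau^2-K_M+\tfrac{1}{\mu}\Delta\mu\bigr)$ evaluated along $\pi(\Sigma)$ (I am writing $u$ for $\mu$ as in the statement, or it should read $\Delta\mu$; the displayed hypothesis $3H^2>\sup_{\pi(\Sigma)}\{\tau^2-K_M+\tfrac1\mu\Delta u\}$ is precisely the condition that this lower bound is a positive constant).

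Granting that computation, part~(1) goes as follows. The hypothesis $3H^2>\sup_{\pi(\Sigma)}\{\tau^2-K_M+\tfrac{1}{\mu}\Delta\mu\}$ makes $L=\Delta+q$ with $q\ge c>0$ for a positive constant $c$, so by Fischer-Colbrie's characterization (already invoked in the introduction) stability would require a positive solution of $\Delta w+qw=0$ on a complete surface with $q$ bounded below by $c>0$; the Meeks--P\'erez--Ros result (via Rosenberg~\cite{Rosenberg}) then forces $\Sigma$ to be compact. Once $\Sigma$ is compact, I split on the angle function $\nu=\langle N,\xi\rangle$ exactly as in the proof of Theorem~\ref{thm:compact-stable}: $\nu$ lies in the kernel of $L$, so $\nu\equiv 0$ or $\nu$ has no zeros. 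If $\nu\equiv 0$ then $\Sigma=\pi^{-1}(\Gamma)$ is a vertical cylinder; but a quick check using Lemma~\ref{lemma:L} shows the zero-order term of $L$ restricted to such a cylinder is too small to be bounded below by $c>0$ (the vertical direction $E_3$ contributes sectional curvature $\tau^2-\tfrac1\mu(\ldots)$, not enough when $3H^2$ dominates), contradicting the hypothesis --- so $\nu$ never vanishes, $\Sigma$ is a compact vertical multigraph, and Lemma~\ref{lemma:H=0} forces $M$ compact, $\int_M\frac{\tau}{\mu}=0$, and $\Sigma$ an entire minimal graph; then the maximum-principle argument of Theorem~\ref{thm:existence-entire-minimal} (or Theorem~\ref{thm:minimal-spheres}) identifies $\pi$ with the projection $\s^2\times\R\to\s^2$.

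For part~(2), the hypothesis is the borderline equality $3H^2=\sup_{\pi(\Sigma)}\{\tau^2-K_M+\tfrac1\mu\Delta\mu\}$ together with $\mu$ bounded; now the potential $q$ is only nonnegative, so I cannot conclude compactness, but the same Meeks--P\'erez--Ros / Rosenberg circle of ideas still applies in the limiting case to give that $\Sigma$ cannot have a zero of $\nu$ with $\nu$ changing sign --- equivalently $\nu$ is either identically zero or of constant sign on $\Sigma$ --- yielding a vertical cylinder or a vertical multigraph respectively. The boundedness of $\mu$ is what lets me control the geometry of $\E$ along $\Sigma$ (bounded Killing length keeps the fibres from degenerating) so that the stability-operator estimate is uniform. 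The main obstacle I anticipate is bookkeeping in the first step: correctly assembling $|A|^2+\mathrm{Ric}(N)$ from Lemma~\ref{lemma:L} and the Gauss equation so that the stated supremum appears with the right sign, and then verifying in the $\nu\equiv0$ case of part~(1) that the induced potential on a vertical cylinder genuinely fails the strict lower bound --- this requires the explicit second fundamental form of the cylinder recorded just before Theorem~\ref{thm:compact-stable} and a short curvature computation, rather than anything conceptually deep.
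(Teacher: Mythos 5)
There is a genuine gap in both parts. In part (1) your key reduction is that the hypothesis makes the stability potential satisfy $q=|A|^2+\mathrm{Ric}(N)\ge c>0$, and everything else (Fischer--Colbrie, ``forces $\Sigma$ compact'') hangs on that. This reduction is false: by Lemma~\ref{lemma:L} the hypothesis is equivalent to the \emph{ambient} bound $3H^2+\tfrac12 S\ge c>0$ along $\pi(\Sigma)$, whereas the potential equals $4H^2-\det A+\tfrac12 S-K$, and the intrinsic term $-K$ is not bounded below by the hypothesis. Concretely, a horizontal slice in the round $\s^2(\kappa)\times\R$ satisfies the hypothesis of (1) with $H=0$ yet has $q\equiv 0$; worse, if $q\ge c>0$ did hold, then taking $f\equiv 1$ in the stability inequality would exclude \emph{every} compact stable surface, including the entire minimal graphs that the corollary asserts exist, so your mechanism is internally inconsistent (and a positive lower bound on $q$ alone would not force compactness of a complete surface anyway). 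The correct invocation of \cite[Theorem~2.13]{MeeksPerezRos} (Rosenberg's estimate) is under the hypothesis $3H^2+\tfrac12 S\ge c>0$, and its conclusion is stronger than compactness: $\Sigma$ is homeomorphic to $\s^2$ or $\mathbb{RP}^2$. That topological output is exactly what eliminates the vertical-cylinder (torus) alternative of Theorem~\ref{thm:compact-stable}: the paper passes to the orientable double cover in the $\mathbb{RP}^2$ case (the cover is still stable) and applies Corollary~\ref{coro:stable-spheres}. Your substitute argument for excluding cylinders (``the zero-order term of $L$ on a cylinder is too small'') is not a proof; if you insist on using only compactness, you would need something like $f\equiv 1$ in the stability inequality combined with Gauss--Bonnet and the bound $q\ge 3H^2+\tfrac12 S-K$ to rule out $\chi(\Sigma)=0$.

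In part (2), ``the same circle of ideas still applies'' is precisely the step that needs a proof, and it is missing. On a non-compact stable surface a bounded Jacobi function need not vanish identically or have a sign without further input; the argument the paper uses is: in the borderline case \cite[Theorem~2.13]{MeeksPerezRos} yields at most quadratic area growth, hence $\Sigma$ is parabolic, and then \cite[Corollary~1]{MPR} applied to the Jacobi function $\nu=\langle N,\xi\rangle$ --- which is bounded because $|\nu|\le\mu$ and $\mu$ is assumed bounded --- gives that $\nu\equiv 0$ or $\nu$ never vanishes, i.e.\ the cylinder/multigraph dichotomy. Note also that your explanation of the hypothesis ``$\mu$ bounded'' (controlling the ambient geometry so the fibres do not degenerate) is not its actual role: it enters only to guarantee that $\nu$ is bounded, which is what the parabolic Liouville-type result of \cite{MPR} requires.
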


\begin{proof}
The condition in item (1) is equivalent to $3H^2+\frac{1}{2}S\geq c>0$ for some $c>0$ by Lemma~\ref{lemma:L}, so \cite[Theorem~2.13]{MeeksPerezRos} guarantees that $\Sigma$ is homeomorphic to $\mathbb{S}^2$ or to the real projective plane $\mathbb{RP}^2$. Taking the universal covering of $\Sigma$ if necessary, both cases lead to the existence of a constant mean curvature sphere $\widetilde\Sigma$ immersed in $\E$, which is also stable (see the comments below Assertion 2.2 in \cite{MeeksPerezRos}). The result now follows from Corollary~\ref{coro:stable-spheres}.

If the condition in item (2) holds, \cite[Theorem~2.13]{MeeksPerezRos} says that $\Sigma$ has at most quadratic area growth, so it is in particular parabolic. Hence~\cite[Corollary 1]{MPR} implies that the angle function of $\Sigma$, which is bounded because $\mu$ is assumed bounded, is identically zero or never vanishes. This dicotomy reflects the two possible scenarios depicted in the statement.
\end{proof}

The hypothesis in the statement of Corollary~\ref{coro:complete-stable} are not expected to be optimal, as shown in~\cite[Corollary~3]{MPR} for $\E(\kappa,\tau)$-spaces. Also $\sup_{\pi(\Sigma)}\{\tau^2-K_M+\frac{1}{\mu}\Delta u\}$ might be infinite, in that case no information about the surface is obtained.


\begin{thebibliography}{[00]}
  \bibliographystyle{alpha}

\bibitem{AA}
A.\ L.\ Albujer, L.\ J.\ Alías.
\newblock Calabi-Bernstein results for maximal surfaces in Lorentzian product spaces.
\newblock \emph{J. Geom. Phys.}, \textbf{59} (2009), no.\ 5, 620--631.

\bibitem{ADR}
L.\ J.\ Alías, M.\ Dajczer, J.\ Ripoll.
\newblock A Bernstein-type theorem for Riemannian manifolds with a Killing field.
\newblock \emph{Ann.\ Glob.\ Anal.\ Geom.}, \textbf{31} (2007), no.\ 4, 363--373.

\bibitem{BCE} 
J.\ L.\ Barbosa, M.\ do Carmo, J.\ Eschenburg.
\newblock Stability of hypersurfaces with constant mean curvature in Riemannian manifolds. 
\newblock \emph{Math. Z.}, \textbf{13} (1988), no. 4, 123--138.

\bibitem{Calabi}
E.\ Calabi.
\newblock Examples of Bernstein problems for some nonlinear equations.
\newblock In \emph{Global Analysis}, Proc. Sympos. Pure Math., Vol XV, Berkeley, Calif., 1968, 223--230.

\bibitem{DL}
M.~Dajczer, J.~H.~de~Lira.
\newblock Killing graphs with prescribed mean curvature and Riemannian submersions.
\newblock \emph{Ann. Inst. H. Poincaré Anal. Non Linéaire}, \textbf{26} (2009), no.\ 3, 763--775.

\bibitem{EO}
J.\ M.\ Espinar, I.\ S.\ de Oliveira.
\newblock Locally convex surfaces immersed in a Killing submersion.
\newblock \emph{Bull. Braz. Math. Soc.}, \textbf{44} (2013), no.\ 1, 155--171.

\bibitem{FC}
D.\ Fischer-Colbrie.
\newblock On complete minimal surfaces with finite Morse index in 3-manifolds.
\newblock \emph{Invent. Math.}, \textbf{82} (1985), no.\ 1, 121--132.

\bibitem{Ger}
C.\ Gerhardt.
\newblock $H$-surfaces in Lorentzian manifolds.
\newblock \emph{Commun. Math. Phys.}, \textbf{89} (1983), no. 4, 523--553. 

\bibitem{GHV}
W.\ Greub, S.\ Halperin, R.\ Vanstone.
\newblock Connections, Curvature, and Cohomology.
\newblock \emph{Pure and applied Mathematics}, \textbf{47}. \emph{Academic Press}, 1976. ISBN: 978-0-08087927-7.

\bibitem{LR}
C.\ Leandro, H.\ Rosenberg.
\newblock Removable singularities for sections of Riemannian submersions of prescribed mean curvature.
\newblock \emph{Bull. Sci. Math.}, \textbf{133} (2009), no.\ 4, 445--452.

\bibitem{LeeMan} H.\ Lee, J.\ M.\ Manzano.
\newblock Generalized Calabi's correspondence and complete spacelike surfaces.
\newblock Preprint available in arXiv:1301.7241[math.DG].

\bibitem{Man14}
J.\ M.\ Manzano.
\newblock On the classification of Killing submersions and their isometries.
\newblock \emph{Pacific J. Math.}, \textbf{270} (2014), no.\ 2, 367--392.

\bibitem{MPR}
J.\ M.\ Manzano. J.\ P\'{e}rez, M.\ Rodr\'{i}guez.
\newblock Parabolic stable surfaces with constant mean curvature.
\newblock\emph{Calc. Var. Partial Differential Equations}, \textbf{42} (2011), no. 1--2, 137--152.

\bibitem{MeeksMiraPerezRos}
W.\ H.\ Meeks, P.\ Mira, J.\ P\'{e}rez, A.\ Ros.
\newblock Constant mean curvature spheres in homogeneous three-spheres.
\newblock Preprint available in arXiv:1308.2612[math.DG].

\bibitem{MeeksPerez}
W.\ H.\ Meeks, J.\ P\'{e}rez.
\newblock Constant mean curvature surfaces in metric Lie groups.
\newblock In \emph{Geometric Analysis: Partial Differential Equations and Surfaces}, Contemporary Mathematics (AMS), \textbf{570} (2012), 25--110.

\bibitem{MeeksPerez2} 
W.~H.~Meeks, J.~P\'{e}rez.
\newblock CMC foliations of closed manifolds
\newblock To appear in \emph{J.\ Geom.\ Anal.} Available in arXiv:1404.1725[math.DG].

\bibitem{MeeksPerezRos}
W.~H.~Meeks, J.~P\'{e}rez, A.\ Ros.
\newblock Stable constant mean curvature surfaces.
\newblock In \emph{Handbook of Geometric Analysis}, \textbf{1} (2008). Editors: Lizhen Ji, Peter Li, Richard Schoen, Leon Simon. International Press. ISBN: 978-1-57146-130-8

\bibitem{MeeksSimonYau}
W.~H.~Meeks, L.~Simon, S.-T.~Yau.
\newblock Embedded minimal surfaces, exotic spheres, and manifolds with positive Ricci curvature.
\newblock\emph{Ann. of Math. (2)}, \textbf{116} (1982), no.\ 3, 621--659.

\bibitem{MO}
M.\ A.\ Meroño, I.\ Ortiz.
\newblock First stability eigenvalue characterization of \textsc{cmc} Hopf tori into Riemannian Killing submersions.
\newblock \emph{J. Math. Anal. Appl.}, \textbf{417} (2014), no. 1, 400--410.

\bibitem{Rosenberg}
H.\ Rosenberg.
\newblock Constant mean curvature surfaces in homogeneously regular 3-manifolds.
\newblock \emph{Bull. Austral.\ Math.\ Soc.}, \textbf{74} (2006), no.\ 2, 227--238.

\bibitem{RST}
H.~Rosenberg, R.~Souam, E.~Toubiana.
\newblock General curvature estimates for stable $H$-surfaces in $3$-manifolds and applications.
\newblock \emph{J.\ Differential Geom.}, \textbf{84} (2010), no.\ 3, 623--648.

\bibitem{SV}
R.~Souam, J.~Van der Veken.
\newblock Totally umbilical hypersurfaces of manifolds admitting a unit Killing field.
\newblock \emph{Trans. Amer. Math. Soc.} \textbf{364} (2012), no. 7, 3609--3626.

\bibitem{Ste}
N.~Steenrod.
\newblock The Topology of Fibre Bundles.
\newblock {\em Princeton Mathematical Series}, vol. 14. {\em Princeton University Press}, 1951. ISBN: 978-069100548-5.

\bibitem{Simon}
L.\ Simon.
\newblock Isolated singularities of extrema of geometric variatational problems.
\newblock \emph{Harmonic mappings and minimal immersions}, vol. 1161, Springer Lecture Notes in Math., 1985, 206--277.

\end{thebibliography}
\end{document}